    \newtheoremstyle{upright}%
        {1pt plus1pt minus1pt}%
        {1pt plus1pt minus1pt}%
        {\upshape}%
        {}%
        {\bfseries\scshape}%
        {\textbf{.}}%
        {1em}%
        {}%
\newtheorem{theorem}{Theorem}[section]
\newtheorem*{TysonsResult}{\textbf{Theorem 4.1 of} [\textbf{Du-CaTy}]}
\newtheorem{lemma}[theorem]{Lemma}
\newtheorem{question}[theorem]{Question}
\newtheorem{proposition}[theorem]{Proposition}
\theoremstyle{definition}
\newtheorem{definition}[theorem]{Definition}
\newtheorem{example}[theorem]{Example}
\newtheorem{notation}[theorem]{Notation}
\newtheorem{remark}[theorem]{Remark}
\renewenvironment{proof}[1][Proof]{\textbf{#1.} }{\ \rule{0.5em}{0.5em}}
\newcommand{\N}{\mathbb{N}} 
\newcommand{\Z}{\mathbb{Z}} 
\newcommand{\ofraci}[1]{\mathscr{O}_{#1}(\xio{#1},\theta_{#1}^0)}
\newcommand{\ofracixang}[3]{\mathscr{O}_{#1}(#2,#3)}
\newcommand{\ofraciang}[2]{\mathscr{O}_{#1}(\xio{#1},#2)}
\newcommand{\xoo}{x_0^0}
\newcommand{\xio}[1]{x_{#1}^0}
\newcommand{\yio}[1]{y_{#1}^0}
\newcommand{\compseqsi}[1]{\{\mathscr{O}_n(\xio{n},\alpha^0)\}_{n=#1}^\infty}
\newcommand{\compseqsiang}[2]{\{\mathscr{O}_n(\xio{n},#2)\}_{n=#1}^\infty}
\newcommand{\compseqsixang}[3]{\{\mathscr{O}_n(#2,#3)\}_{n=#1}^\infty}
\newcommand{\mbfa}{\mathbf{a}}
\newcommand{\mbf}[1]{\mathbf{#1}}
\newcommand{\celli}[2]{C_{#1,#2}} 
\newcommand{\alphaio}[1]{\alpha_{#1}^0}
\newcommand{\ssai}[1]{\mathcal{S}(S_{\mbfa,#1})}
\newcommand{\sai}[1]{S_{\mbfa,#1}}
\newcommand{\omegasa}[1]{\Omega(S_{#1})}
\newcommand{\omegasai}[2]{\Omega(S_{#1,#2})}
\newcommand{\omegasi}[1]{\Omega(S_{\mathbf{a},#1})}
\newcommand{\osi}[1]{\mathscr{O}_{#1}(\xio{#1},\alpha_{#1}^0)}
\newcommand{\osixang}[3]{\mathscr{O}_{#1}(#2,#3)}
\newcommand{\slopesa}[1]{\text{Slope}(S_{#1})}
\title{Periodic billiard orbits of self-similar Sierpinski carpets}
\author{Joe P. Chen}
\address{Departments of Physics \& Mathematics, Cornell University, Ithaca, NY 14853, USA}
\email{joe.p.chen@cornell.edu}
\author{Robert G. Niemeyer}
\address{Department of Mathematics \& Statistics, University of New Mexico, Albuquerque, Albuquerque, NM  87131, USA}
\email{niemeyer@math.unm.edu}
\thanks{The work of R. G. Niemeyer was partially supported by the National Science Foundation under the NSF MCTP grant DMS-1148801, while an MCTP postdoctoral fellow at the University of New Mexico, Albuquerque.}
\numberwithin{equation}{section}
\begin{document}

\begin{abstract}
We identify a collection of periodic billiard orbits in a self-similar Sierpinski carpet billiard table $\Omega(S_\mbfa)$. Based on a refinement of the result of Durand-Cartagena and Tyson regarding nontrivial line segments in $S_\mbfa$, we construct what is called an eventually constant sequence of compatible periodic orbits of prefractal Sierpinski carpet billiard tables $\Omega(S_{\mbfa,n})$.  The trivial limit of this sequence then constitutes a periodic orbit of $\Omega(S_\mbfa)$.  We also determine the corresponding translation surface $\mathcal{S}(S_{\mbfa,n})$ for each prefractal table $\Omega(S_{\mbfa,n})$, and show that the genera $\{g_n\}_{n=0}^\infty$ of a sequence of translation surfaces $\{\mathcal{S}(S_{\mbfa,n})\}_{n=0}^\infty$ increase without bound. Various open questions and possible directions for future research are offered.
\end{abstract}

\maketitle

\tableofcontents
\section{Introduction}
\label{sec:Introduction}

The subject of polygonal billiards is well-developed; see, e.g., \cite{Gut1,Gut2,HuSc,KaZe,MasTa,Sm,Ta1,Ta2,Zo} and the pertinent references therein for excellent surveys on the subject and the current state of the art.  There is a particular family of fractal sets, each of which can be viewed as the limit of a sequence of polygonal approximations.  M. L. Lapidus and the second author have begun investigating the billiard dynamics on the Koch snowflake fractal billiard table and the so-called T-fractal billiard table; see \cite{LapNie1,LapNie2,LapNie3}. Besides determining periodic orbits of these two fractal billiard tables, these articles proposed a possible framework in which one could begin investigating the billiard dynamics on an arbitrary billiard table with fractal boundary.

In this article, we proceed to identify a collection of periodic billiard orbits in the self-similar Sierpinski carpet billiard table $\Omega(S_{\bf a})$, where ${\bf a}:=\{a_{i}\}_{i=1}^\infty$ is a constant sequence of positive odd integers with $a_i\geq 3$ for every $i\geq 1$; see Figure \ref{fig:S5-123}. To fix notations, we regard $S_{\bf a}$ as a subset of the unit square $Q$ in $\mathbb{R}^2$, and fix the lower-left corner of $Q$ to be the origin $(0,0)$. The boundary of an open square removed in the construction of $S_{\bf a}$ is referred to as a \emph{peripheral square} of $S_{\bf a}$.

\begin{figure}
\begin{center}
\includegraphics[scale=.2]{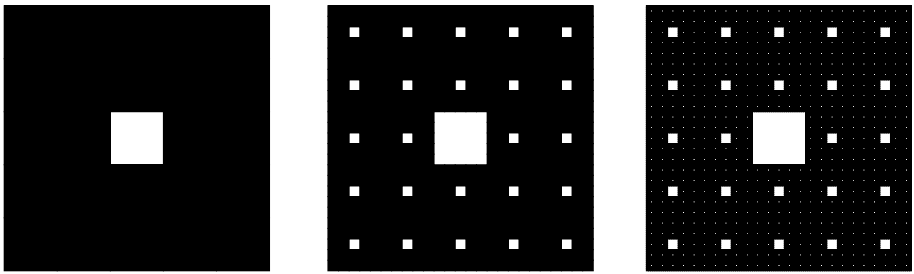}
\end{center}
\caption{The first three approximations of the self-similar Sierpinski carpet $S_\mbf{5}$.  The explicit construction of a Sierpinski carpet is given in \S\ref{subsec:ASelfSimilarSierpinskiCarpet}.}
\label{fig:S5-123}
\end{figure}

The basis for our work is the recent result of \cite{Du-CaTy} by E. Durand-Cartagena and J. Tyson, who identified all the possible slopes of nontrivial line segments in a given Sierpinski carpet $S_{\bf a}$. This set of slopes, denoted by ${\rm Slopes}(S_\mbfa)$, is a finite set of rational values, and equals the disjoint union of $A_\mbfa$ and $B_\mbfa$, where (see Theorem 4.1 of \cite{Du-CaTy})
\begin{align*}
A_{\bf a} &= \left\{\pm\frac{p}{q}, \pm\frac{q}{p} \,\, :\,\, p+q\leq a,\, 0\leq p< q\leq a-1,\, p,q\in \N\cup \{0\},\, p+q \text{ is odd}\right\}, \\
B_{\bf a} &= \left\{\pm\frac{p}{q}, \pm\frac{q}{p} \,\,:\,\, p+q\leq a-1,\, 0\leq p\leq q\leq a-2,\, p,q\in \N,\, p \text{ and } q \text{ are odd}\right\}.
\end{align*}
\noindent We note that $A_{\bf{a-2}} \subsetneq A_{\bf a}$ and $B_{\bf{a-2}} \subsetneq B_{\bf a}$. Roughly speaking, a nontrivial line segment of slope $\alpha \in A_{\bf a}$ can emanate from the origin $(0,0)$, and a nontrivial line segment of slope $\alpha \in B_{\bf a}$ can emanate from $(\frac{1}{2},0)$, although the association between the slope set and starting point is not absolute. For example in $S_{\bf 7}$, the line segments emanating from, respectively, $(0,0)$ and $(\frac{1}{2},0)$, with slope $\frac{2}{3} \in A_{\bf 7}$ are both nontrivial line segments; see \S4.

Key to our description of particular family of periodic orbits of a self-similar Sierpinski carpet is analyzing whether nontrivial line segments hit or avoid peripheral squares of $S_{\bf a}$. It has already been shown in \cite{Du-CaTy} that nontrivial line segments of slope $\alpha \in B_{\bf a}$ starting from $(\frac{1}{2},0)$ avoid all peripheral squares of $S_{\bf a}$. We show below in Theorem \ref{thm:refinement} an important distinction between nontrivial line segments of slope $\alpha \in A_{\bf{a-2}}$ and those of slope $\alpha \in A_{\bf a} \backslash A_{\bf{a-2}}$. Our result is as follows. Suppose that the line segment starts from $(0,0)$. If the segment has slope $\alpha \in A_{\bf{a-2}}$, then it avoids all peripheral squares of $S_{\bf a}$. If the segment has slope $\alpha \in A_{\bf a}\backslash A_{\bf{a-2}}$, then it must intersect with the corner of some peripheral square of $S_{\bf a}$. This distinction was not stated in Theorem 4.1 of \cite{Du-CaTy}, and turns out to play an important role in identifying particular periodic billiard orbits of a self-similar Sierpinski carpet billiard.

We then apply the results about nontrivial line segments to obtain information about the billiard orbits. This is achieved via a sequence of arguments regarding the scaling of cells and reflected-unfolding of particular orbits. We view a Sierpinski carpet as the unique fixed point attractor of a sequence of rational polygonal approximations (or prefractal approximations).  As such, we investigate the limiting behavior of particular sequences of compatible periodic orbits of prefractal billiard tables.\footnote{We present the necessary concepts and definitions from the subject of fractal billiards in \S\ref{subsec:fractalBilliards}.} Our main result, stated more precisely in Theorem \ref{thm:sequenceOfCompatiblePeriodicOrbits}, says that the following orbits in $\Omega(S_\mbfa)$ are periodic:

\begin{itemize}
\item Those starting from $(0,0)$ with slope $\alpha \in A_{\bf{a-2}}$.
\item Those starting from $(\frac{r}{2a^n},0)$ with slope $\alpha \in B_{\bf a}$, for any $n \in \mathbb{N}$ and any positive odd integer $r<2a^n$.
\end{itemize}

\emph{A priori}, this list does not exhaust all of the initial conditions giving rise to periodic orbits of $\Omega(S_\mbfa)$. In addition to describing particular periodic orbits, we can understand what constitutes the larger collection of closed orbits of $\Omega(S_\mbfa)$. In Proposition \ref{prop:compatibleclosedorbits}, we show that an orbit starting from $(\frac{k}{a^n},0)$ with slope $\alpha \in A_{\bf{a-2}}$, some $n\in \mathbb{N}$, and some positive integer $k<a^n$ is always closed, but not always periodic.

We believe that investigating the self-similar Sierpinski carpet billiard may inspire further studies on related polygonal billiards of infinite genus. Indeed, while we can identify the translation surface of the pre-Sierpinski carpet billiard table $S_{{\bf a},n}$, and show that their genera grow unboundedly as $n\to\infty$ (\S 4.1), it is unclear how one may define a translation surface for the fractal billiard table $\Omega(S_{\bf a})$, since $S_{\bf a}$ has zero Lebesgue area. In \S 5, we suggest two related billiard tables which share similar features with $S_{\bf a}$, but have nonzero Lebesgue area.

The paper is organized as follows: In \S\ref{sec:background}, we provide some background on the Sierpinski carpet, mathematical billiards, and translation surfaces. Our result on nontrivial line segments in $S_{\bf a}$, which clarifies part of Theorem 4.1 in \cite{Du-CaTy}, is described in \S\ref{sec:ARefinementOfTheorem4.1OfDuCaTy}. The core of this article is contained in \S\ref{sec:EventuallyConstantSequencesOfCompatiblePeriodicOrbitsOfSa}, where we identify the two types of periodic billiard orbits in $S_{\bf a}$, and describe the translation surface corresponding to the pre-Sierpinski carpet billiard table. Open problems are addressed in \S\ref{sec:ConcludingRemarks}.

\section{Background}
\label{sec:background}

In this section, we provide the necessary background for understanding the remainder of the article.  We will give the necessary definitions from the field of  fractal geometry and mathematical billiards, as well as the pertinent terms from \cite{Du-CaTy}.  In addition to this, we will draw upon the vocabulary of the emerging field of fractal billiards when discussing a self-similar Sierpinski carpet billiard $\omegasa{\mbfa}$ in \S\ref{sec:EventuallyConstantSequencesOfCompatiblePeriodicOrbitsOfSa}; see \cite{LapNie1,LapNie2,LapNie3}.

\subsection{Self-similarity and Sierpinski carpets}
\label{subsec:ASelfSimilarSierpinskiCarpet}

\begin{definition}[A self-similar Sierpinski carpet via an IFS]
\label{def:ASelfSimSierpinskiCarpetViaAnIFS}
Let $a\geq 3$ be an positive odd integer and $\phi_i:\mathbb{R}^2 \to \mathbb{R}^2$ a similarity contraction defined as:

\begin{align}
\phi_i(x) &:= \frac{1}{a} (x+(u_i,v_i)),
\end{align}

\noindent where $u_i,v_i\in \mathbb{N}\cup\{0\}$, $0\leq u_i\leq a-1$, $0\leq v_i\leq a-1$ and $(u_i,v_i)\neq (\frac{a-1}{2},\frac{a-1}{2})$.  Then, $\{\phi_i\}_{i=1}^{a^2-1}$ is an iterated function system.  The unique fixed point attractor of the map $\Phi(\cdot) := \bigcup_{i=1}^{a^2-1} \phi_i(\cdot)$ is called a \textit{self-similar Sierpinski carpet}.
\end{definition}

We recall the notation used in constructing Sierpinski carpets in \cite{Du-CaTy}.  Let

\begin{equation*}
\mbfa = (a_1^{-1},a_2^{-1},...) \in \left\{\frac{1}{3},\frac{1}{5},\frac{1}{7},...\right\}^\N.
\end{equation*}

Let $Q$ be the unit square. Beginning with $S_0 := Q$, partition $S_0$ into $a_1^2$ equal squares of side-length $a_1^{-1}$ and remove the middle open square, leaving $a_1^2 - 1$ many squares.  We denote the union of the remaining squares by $S_{\mbfa, 1}$ and refer to $S_{\mbfa, 1}$ as the first level approximation of the Sierpinski carpet $S_\mbfa$.  Then, partition each remaining square with side-length $a_1^{-1}$ into $a_2^{2}$ many squares with side-length $a_1^{-1}\cdot a_2^{-1}$.  From each square with side-length $a_1^{-1}$, remove the middle open square of side-length $a_1^{-1}\cdot a_2^{-1}$.  The union of the remaining squares of side-length $a_1^{-1}\cdot a_2^{-1}$ constitutes the second level approximation $S_{\mbfa,2}$ of a Sierpinski carpet $S_\mbfa$.  Continuing this process ad infinitum, we construct the Sierpinski carpet $S_\mbfa$.  More precisely,

\begin{align}
S_\mbfa &= \bigcap_{n=0}^\infty S_{\mbfa, n},
\end{align}

\noindent where $S_{\mbfa,0} = S_0$.
\begin{definition}[A self-similar Sierpinski carpet]
\label{def:ASelfSimilarSierpinskiCarpet}
If $\mathbf{a}=\{a_i^{-1}\}_{i=0}^\infty$, with $a_i = 2k_i+1$ and $k_i \in \N$, is a periodic sequence, then the Sierpinski carpet $S_\mbfa$ is called a \textit{self-similar Sierpinski carpet}.
\end{definition}


Both Definitions \ref{def:ASelfSimSierpinskiCarpetViaAnIFS} and \ref{def:ASelfSimilarSierpinskiCarpet} are equivalent.  More precisely, when $\mbfa$ is a constant sequence, then it is clear how the two definitions are equivalent.  When $\mbfa$ is a nonconstant periodic sequence, then there exists $a'$ such that $S_\mbfa$ is the unique fixed point attractor of an appropriately defined iterated function system $\{\phi_i\}_{i=1}^{{a'}^2-1}$. For the purposes of this paper, it may be advantageous to use one definition over another, depending on the context of the situation.\footnote{For a more detailed discussion of self-similar sets and iterate function systems, see \cite{Hut}.  For a more complete treatment of the subject of fractal geometry, see \cite{Fa}.}

\begin{definition}[A cell of $S_{\mathbf{a},n}$]
\label{def:ACellOfSai}
Let $a_0=2k_0+1$, $k_0\in \mathbb{N}$.  Consider a partition of the unit square $Q=S_0$ into $a_0^2$ many squares of side-length $a_0^{-1}$.  A subsquare of the partition is called a \textit{cell of $S_0$} and is denoted by $\celli{0}{a_0}$.  Let $S_\mathbf{a}$ be a Sierpinski carpet.  Consider a partition of the prefractal approximation $S_{\mathbf{a},n}$ into subsquares with side-length $(a_0\cdot a_1\cdots a_n)^{-1}$.  A subsquare of the partition of $S_{\mathbf{a},n}$ is called a \textit{cell of $S_{\mathbf{a},n}$} and is denoted by $\celli{n}{a_0a_1\cdots a_n}$ and has side-length $(a_0\cdot a_1\cdots a_n)^{-1}$.
\end{definition}

\begin{figure}
\begin{center}
\includegraphics[scale=.75]{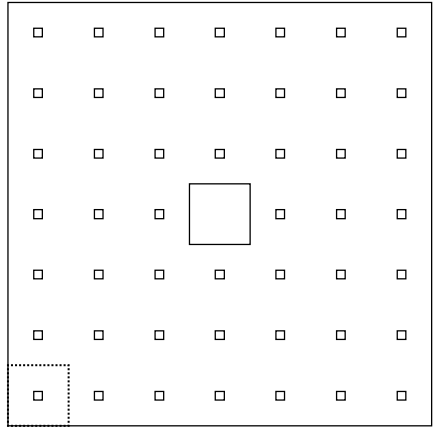}
\end{center}
\caption{We see here an example of a cell $\celli{1}{a}$ of $S_{\mbf{7},1}$.  While this cell is clearly shown as a subset of $S_{\mbf{7},2}$, it has a side-length that indicates it is called a cell of $S_{\mbf{7},1}$.}
\label{fig:anExampleOfACell}
\end{figure}

Let $\mbfa$ be a constant sequence of odd, positive integers such that $S_\mbfa$ is a self-similar Sierpinski carpet and $\{\phi_i\}_{i=1}^{a^2-1}$ an iterated function system for which $S_\mbfa$ is the unique fixed point attractor. If $\celli{n}{a^n}$ is a cell of $S_{\mbfa,n}$, then $\celli{n}{a^n}=\phi_{i_n}\circ\cdots\circ\phi_{i_1}(Q)$ for suitably chosen $i_1,i_2,...,i_n$.

We now define the important notion of a \textit{peripheral square}.

\begin{definition}[Peripheral square]
\label{def:PeripheralSquare}
In accordance with the convention established in \cite{Du-CaTy}, the boundary of an open square removed in the construction of $S_\mbfa$ is called a \textit{peripheral square} of $S_\mbfa$.  By convention, the unit square $Q=S_0$ is not a peripheral square.
\end{definition}

\begin{example}[A cell of the prefractal $S_{\mbf{7},1}$]
In Figure \ref{fig:anExampleOfACell}, we see an example of a cell of the prefractal approximation $S_{\mbf{7},1}$. We wish to emphasize the fact that the cell with the middle open square removed constitutes the scaling of $S_{\mbf{7},1}$ (relative to the origin) by $\frac{1}{7}$.  At times, we may make reference to a cell $\celli{m}{a^m}$, even when such a region is considered as a subset of $S_{\mbfa,n}$ for $n > m$.  In such a case, a cell $\celli{m}{a^m}$ will contain peripheral squares of $S_{\mbfa,n}$ with side length $a^{-k}$ for all $k$ such that $m<k\leq n$.
\end{example}

Also of great importance is the notion of a \textit{nontrivial line segment}.

\begin{definition}[Nontrivial line segment of $S_\mbfa$]
\label{def:nontrivialLineSegmentOfSa}
A \textit{nontrivial line segment of a Sierpinski carpet $S_\mbfa$} is a (straight-line) segment of the plane contained in $S_\mbfa$ that has nonzero length.
\end{definition}


\begin{notation}
Let $S_\mbfa$ be a Sierpinski carpet.  We denote by $\slopesa{\mbfa}$ the set of slopes, with values in $[0,1]$, of nontrivial line segments in $S_\mbfa$. There is no loss of generality, since applying an isometry of the square to a slope $\alpha \in \slopesa{\mbfa}$ yields a slope of $\frac{1}{\alpha}$, $-\alpha$, or $-\frac{1}{\alpha}$.
\end{notation}

\begin{TysonsResult}
\label{thm:TysonsResult}
Let $\mbfa=(\frac{1}{a},\frac{1}{a},\frac{1}{a},...)$ be a constant sequence.  Then the set of slopes of nontrivial line segments $\slopesa{\mbfa}$ is the union of the following two sets\emph{:}

\begin{align}
A_\mbfa &= \left\{\frac{p}{q} \,\, :\,\, p+q\leq a,\, 0\leq p< q\leq a-1,\, p,q\in \N\cup \{0\},\, p+q \text{ is odd}\right\}, \label{eqn:Aset}\\
B_\mbfa &= \left\{\frac{p}{q} \,\,:\,\, p+q\leq a-1,\, 0\leq p\leq q\leq a-2,\, p,q\in \N,\, p \text{ and } q \text{ are odd}\right\}.\label{eqn:Bset}
\end{align}

\noindent Moreover, if $\alpha\in A_\mbfa$, then each nontrivial line segment in $S_\mbfa$ with slope $\alpha$ touches vertices of peripheral squares, while if $\alpha\in B_\mbfa$, then each nontrivial line segment in $S_\mbfa$ with slope $\alpha$ is disjoint from all peripheral squares.  For each $\alpha\in A_\mbfa \cup B_\mbfa$, there exist maximal line segments in $S_\mbfa$ with slope $\alpha$.  Finally, if $b<a$, then any maximal nontrivial line segment in $S_\mbf{b}$ is also contained in $S_\mbfa$.  In particular, $\slopesa{\mbf{b}}\subseteq\slopesa{\mbfa}$.
\end{TysonsResult}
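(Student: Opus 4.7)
The plan is to prove the theorem in four phases, with the substantive work concentrated in the two inclusions $A_\mbfa \cup B_\mbfa \subseteq \slopesa{\mbfa}$ (existence) and $\slopesa{\mbfa} \subseteq A_\mbfa \cup B_\mbfa$ (necessity); the vertex-vs-disjoint dichotomy, the existence of maximal segments, and the monotonicity in $a$ then follow essentially as corollaries.

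For the existence direction, I would exhibit a concrete nontrivial segment for each admissible slope: for $\alpha = p/q \in A_\mbfa$ with $\gcd(p,q) = 1$, take the line $y = (p/q) x$ emanating from $(0,0)$; for $\alpha \in B_\mbfa$, take $y = (p/q)(x - \tfrac{1}{2})$ from $(\tfrac{1}{2}, 0)$. To show that each such line lies in $S_\mbfa = \bigcap_{n} S_{\mbfa, n}$, I would induct on the prefractal level $n$, exploiting self-similarity. The inductive invariant is that whenever the line enters a cell $C$ of $S_{\mbfa, n}$, the entry point is either a vertex of $C$ (in the $A$ case) or the midpoint of one of its edges (in the $B$ case); rescaling $C$ to the unit square then reproduces the initial configuration, so the induction hypothesis applies within $C$. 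The base case — that the chosen line avoids (or merely touches the corner of) the middle square $(\tfrac{a-1}{2a}, \tfrac{a+1}{2a})^2$ of $S_{\mbfa, 1}$ — reduces to the elementary inequality $p(a+1) \leq q(a-1)$, equivalently $p + q \leq a(q - p)$, which is guaranteed by $p + q \leq a$ together with $p < q$. Equality holds precisely in the borderline case $q - p = 1$, $p + q = a$, which is exactly where the line grazes the corner of the middle square. The parity conditions on $p, q$ then encode whether the typical entry point into a daughter cell is a vertex or a midpoint, closing the induction.

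The necessity direction is where I expect the main obstacle, since one must now rule out every $\alpha \notin A_\mbfa \cup B_\mbfa$. My strategy is contrapositive: given a line of slope $\alpha = p/q$ with $\gcd(p,q) = 1$ violating the inequalities or the parity conditions, I want to show that at some finite level $n$ the line must cross the interior of a removed middle square of $S_{\mbfa, n}$. For rational $\alpha$ with $p + q > a$, the existence computation reverses and already the level-$1$ middle square is hit unless the line is confined to a narrow vertical strip; an iterated self-similar shift argument then propagates the obstruction to every level. For slopes that satisfy the size inequalities but fail only the parity conditions, the successive entry points fail to lock into the vertex/midpoint lattice, and a pigeonhole argument on the base-$a$ expansions of the intercept modulo $1/a^n$ forces an eventual crossing of a middle square. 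Irrational slopes are ruled out by an equidistribution argument on the intercepts modulo $1$.

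The vertex-vs-disjoint dichotomy is then a direct corollary of the existence construction: for $\alpha \in A_\mbfa$, entry points are vertices of cells, so the line touches vertices of peripheral squares at infinitely many levels; for $\alpha \in B_\mbfa$, entry points are edge midpoints, which by construction lie in $S_\mbfa$ but not on any peripheral square. Maximality of nontrivial line segments follows from closedness of $S_\mbfa$ via a standard compactness or Zorn argument. Finally, $\slopesa{\mbf{b}} \subseteq \slopesa{\mbfa}$ for $b < a$ is immediate from the defining inequalities for $A$ and $B$ being monotone in $a$, and the stronger assertion that a maximal segment in $S_\mbf{b}$ remains inside $S_\mbfa$ follows because the explicit segment constructions from the existence phase depend only on the slope's arithmetic data, which persists under $\mbf{b} \to \mbfa$.
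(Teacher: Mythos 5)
First, a point of orientation: the paper does not prove this statement at all. It is quoted verbatim as Theorem 4.1 of \cite{Du-CaTy} and used as background; indeed, the entire purpose of \S\ref{sec:ARefinementOfTheorem4.1OfDuCaTy} is to show that the second paragraph of the statement (the vertex-versus-disjoint dichotomy for $A_\mbfa$) is not correct as written and must be refined into Theorem \ref{thm:refinement}. So there is no in-paper proof to compare against, and any complete ``proof'' of the statement as quoted would have to contain an error, since the paper exhibits an explicit counterexample: the segment of slope $\frac{2}{3}\in A_{\mbf{7}}$ emanating from $(0,0)$ is nontrivial in $S_{\mbf{7}}$ yet avoids every peripheral square (Item (1) of \S\ref{sec:ARefinementOfTheorem4.1OfDuCaTy} and Figure \ref{fig:2-3NTLSStartingFrom0}). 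Your claim that ``for $\alpha\in A_\mbfa$, entry points are vertices of cells, so the line touches vertices of peripheral squares at infinitely many levels'' therefore proves something false; the correct dichotomy distinguishes $A_{\mbf{a-2}}$ (disjoint from all peripheral squares, when starting at the origin) from $A_\mbfa\setminus A_{\mbf{a-2}}$ (touches corners), and the distinction is invisible to your argument.

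The source of the trouble is your inductive invariant, which is false. A line of slope $\frac{p}{q}$ from the origin enters the cells of the level-$n$ grid at points whose coordinates are of the form $\frac{pu}{qa^n}$, which are in general neither vertices nor edge-midpoints of cells: for $a=7$, $\alpha=\frac{2}{3}$, the line $y=\frac{2}{3}x$ meets the grid line $x=\frac{1}{7}$ at height $\frac{2}{21}$, which is neither $0$, $\frac{1}{7}$, nor $\frac{1}{14}$. Hence rescaling a cell to the unit square does not reproduce the initial configuration and the induction does not close. What is actually needed --- and what both \cite{Du-CaTy} and the proof of Theorem \ref{thm:refinement} supply --- is a global arithmetic analysis of whether the line can meet the corner $\bigl(\frac{u}{a^n}+\frac{a+1}{2a^{n+1}},\,\frac{v}{a^n}+\frac{a-1}{2a^{n+1}}\bigr)$ or the interior of a middle square at \emph{every} scale simultaneously; this is a divisibility/parity computation in the spirit of Lemmas \ref{lem:AaNotAa-2}--\ref{lem:divisor}, not a self-similar reduction to the base case. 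Your base-case inequality $p(a+1)\leq q(a-1)$ is correct as far as it goes, but it only controls the central middle square of $S_{\mbfa,1}$, not the $a^2-2$ other cells the segment traverses at level $1$, let alone deeper levels. The necessity direction of your proposal (pigeonhole on base-$a$ expansions, equidistribution for irrational slopes) is likewise only a sketch of intent rather than an argument, but the decisive gap is the false invariant and the false dichotomy it is meant to deliver.
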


Observe that $A_{\mbf{a-2}} \subsetneq A_\mbfa$ and $B_{\mbf{a-2}} \subsetneq B_\mbfa$. For concreteness, we give $\slopesa{\mbfa}$ for the first four self-similar Sierpinski carpets:

\begin{eqnarray*}
A_{\mbf{3}} = \left\{\mathbf{{\color{red} 0}},\mathbf{{\color{red} \frac{1}{2}}} \right\} &,&
B_{\mbf{3}} = \left\{ \mathbf{{\color{blue} 1}} \right\},\\
A_{\mbf{5}} = \left\{{\color{red} 0}, \mathbf{{\color{red} \frac{1}{4}}}, {\color{red} \frac{1}{2}}, \mathbf{{\color{red} \frac{2}{3}}} \right\}&,&
B_{\mbf{5}} = \left\{{\mathbf{{\color{blue} \frac{1}{3}}}}, {\color{blue} 1} \right\},\\
A_{\mbf{7}} = \left\{{\color{red} 0}, \mathbf{{\color{red} \frac{1}{6}}}, {\color{red} \frac{1}{4}}, \mathbf{{\color{red} \frac{2}{5}}},  {\color{red} \frac{1}{2}}, {\color{red} \frac{2}{3}}, \mathbf{{\color{red} \frac{3}{4}}} \right\}&,&
B_{\mbf{7}} = \left\{\mathbf{{\color{blue} \frac{1}{5}}}, {\color{blue} \frac{1}{3}}, {\color{blue} 1} \right\},\\
A_{\mbf{9}} = \left\{{\color{red} 0}, \mathbf{{\color{red} \frac{1}{8}}}, {\color{red} \frac{1}{6}},  {\color{red} \frac{1}{4}}, \mathbf{{\color{red} \frac{2}{7}}}, {\color{red} \frac{2}{5}},  {\color{red} \frac{1}{2}}, {\color{red} \frac{2}{3}}, {\color{red} \frac{3}{4}}, \mathbf{{\color{red} \frac{4}{5}}} \right\}&,&
B_{\mbf{9}} = \left\{\mathbf{{\color{blue} \frac{1}{7}}}, {\color{blue} \frac{1}{5}}, {\color{blue} \frac{1}{3}},\mathbf{{\color{blue} \frac{3}{5}}}, {\color{blue} 1} \right\}.
\end{eqnarray*}

We have presented the slopes in color in order to represent those in, respectively, ${\color{red}  A_{\mbf{a-2}}}$, $\mathbf{\color{red} A_\mbfa \setminus A_{\mbf{a-2}}}$, ${\color{blue} B_{\mbf{a-2}}}$, and $\mathbf{{\color{blue} B_\mbfa \setminus B_{\mbf{a-2}}}}$. Note that the elements of $A_\mbfa\setminus A_{\mbf{a-2}}$ (resp., $B_\mbfa \setminus B_{\mbf{a-2}}$) are bolded while the elements of $A_{\mbf{a-2}}$ (resp., $B_\mbf{a-2}$) are not bolded.  For example, the elements $\frac{1}{4}$, $\frac{1}{3}$ and $\frac{2}{3}$ in $\slopesa{5}$ listed above are bolded, while the elements $0$, $\frac{1}{2}$ and $1$ in $\slopesa{5}$ are not.  The distinction between slopes in $A_{\mbf{a-2}}$ and slopes in $A_\mbfa \setminus A_{\mbf{a-2}}$ will be crucial to our refinement of Theorem 4.1 of \cite{Du-CaTy} stated below as Theorem \ref{thm:refinement}, as well as our main results stated in Theorem \ref{thm:sequenceOfCompatiblePeriodicOrbits} and Proposition \ref{prop:compatibleclosedorbits}.


\subsection{Mathematical billiards}
\label{subsec:mathematicalBilliards}

Consider a point mass making a collision in a boundary subject to the Law of Reflection.  That is, the angle of reflection is equal to the angle of incidence.  Alternatively (and equivalently), one may consider the incoming vector reflected through the tangent at the point of collision; see Figure \ref{fig:billiardMap}. Furthermore, we require, prior to colliding in the boundary, that the point mass be traveling in a straight line. For the remainder of the article, we assume that a mathematical billiard table $\Omega(D)$ with boundary $D$ is a path-connected region in the plane with sufficiently piecewise smooth boundary\footnote{In the case of a polygonal billiard table, there are finitely many vertices where a well-defined tangent does not exist.} that is also simple and connected; see Figure \ref{fig:billiardEllipse} for an example of such a billiard table.

\begin{figure}
\begin{center}
\includegraphics[scale=.65]{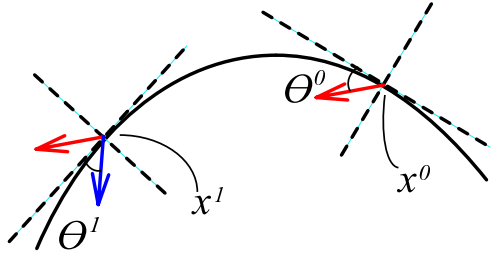}
\end{center}
\caption{A demonstration of how a billiard ball reflects in the sufficiently smooth boundary of a billiard table.}
\label{fig:billiardMap}
\end{figure}

\begin{figure}
\begin{center}
\includegraphics[scale=.5]{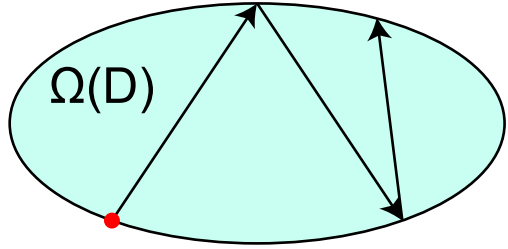}
\end{center}
\caption{The billiard table with a boundary that is an ellipse constitutes an example of a mathematical billiard.}
\label{fig:billiardEllipse}
\end{figure}

An initial condition of the billiard flow is given by $(x^0,\theta^0)$, where we take $x^0$ to be on the boundary $D$ of the billiard table $\Omega(D)$ and $\theta^0$ to be an angle measured relative to the tangent line at $x^0$.  The next point of collision $x^1$ in the boundary is determined by the billiard map $f_D$. That is, $f_D(x^0,\theta^0) = (x^1,\theta^1)$, where $\theta^1$ is an inward pointing vector based at $x^1$. Then, successive iterates $f^k$, $k \geq 1$, of the billiard map $f$ determine the collision point $x^k$ of the billiard ball in the boundary $D$ of the billiard table. To be clear, the angle of reflection $\theta^k$, determined from the Law of Reflection, is the angle made by the vector based at $x^k$ pointing inward towards the interior of $\Omega(D)$, measured relative to the tangent based at $x^k$.  Then, one can describe an equivalence relation $\sim$ on $\Omega(D)\times S^1$ that amounts to identifying the outward pointing vector based at a point $x^k$ with the inward pointing vector based at $x^k$.\footnote{For a more complete treatment of the billiard map $f_D$, the billiard flow $\phi^t$ in the phase space and an equivalence relation placed on $\Omega(D)\times S^1$, see \cite{Sm}.}


Then $f_D([(x^0,\theta^0)]) = [(x^1,\theta^1)]$, where $\theta^1$ is the angle made by the inward pointing vector, measured relative to the tangent at $x^1$.  For now, we shall denote by $(x,\theta)$ the equivalence class of $[(x,\theta)]$, relative to the equivalence relation $\sim$.  That is, $(x,\theta)$, such that $\theta$ is the angle determined from the inward pointing vector, is the representative element of the equivalence class $[(x,\theta)]$. In order to reduce the complexity of the phase space, we only consider the space $(D\times S^1)/\sim$.

\begin{definition}[An orbit of $\Omega(D)$]
Let $f_D$ be the billiard map describing the discrete billiard flow in the phase space $(\Omega(D)\times S^1)/\sim$.  An orbit $\mathscr{O}(x^0,\theta^0)$ of $\Omega(D)$ is then given by $\{f_D^n(x^0,\theta^0)\}_{n=0}^\infty$.
\end{definition}

We frequently make an abuse of notation and say that an orbit is the path traversed by the billiard ball connecting the base points $x^i$ of $f_D^i(x^0,\theta^0)$, $0\leq i\leq N$.

\begin{definition}[A closed orbit of $\Omega(D)$]
An orbit $\mathscr{O}(x^0,\theta^0)$ of $\Omega(D)$ is said to be \textit{closed} if the orbit consists of finitely many elements.
\end{definition}

\begin{definition}[A periodic orbit of $\Omega(D)$]
Let $\mathscr{O}(x^0,\theta^0)$ be a closed orbit of $\Omega(D)$.  If there exists a least integer $m\geq 1$ such that $f^m(x^0,\theta^0) = (x^0,\theta^0)$, then we say the orbit $\mathscr{O}(x^0,\theta^0)$ is a \textit{periodic} orbit.

\end{definition}

\begin{remark}
One does not call an orbit \textit{closed} simply because one decides to arbitrarily terminate the billiard flow. If, under the billiard flow, the billiard ball intersects a point of the boundary for which reflection cannot be determined in a well-defined manner, then the billiard ball trajectory terminates at that point.  Such an orbit is then called \emph{singular}. If $\mathscr{O}(x^0,\theta^0)$ is a singular orbit and there exists $k\in \mathbb{N}$ such that the base point $x^{-k}$ of $f^{-k}(x^0,\theta^0)$ is a point of $D$ not admitting a well defined derivative, then we say $\mathscr{O}(x^0,\theta^0)$ forms a \textit{saddle connection}.  In such a case, $\mathscr{O}(x^0,\theta^0)$ is also a closed orbit.
\end{remark}

\begin{definition}[A dense orbit of $\Omega(D)$]
If the path traversed by a billiard ball is dense in $\Omega(D)$ (forward or backward in time), then the orbit $\mathscr{O}(x^0,\theta^0)$ is said to be dense in $\Omega(D)$.
\end{definition}

\begin{definition}[Rational polygonal billiard]
A \textit{rational polygon} is a polygon where each interior angle $v_i$ is of the form $\frac{p_i}{q_i}\pi$, $p_i,q_i\in \Z$, $q_i\neq 0$. A \textit{rational polygonal billiard} is a billiard table with boundary $D$, where $D$ is a rational polygon.
\end{definition}

\subsubsection{Translation surfaces via rational polygonal billiard tables}
\label{subsubsec:TranslationSurfacesViaRationalPolygonalBilliardTables}

Consider a rational polygonal billiard $\Omega(D)$ with interior angles $\frac{p_1}{q_1}\pi,...,\frac{p_k}{q_k}\pi$.  If $N=\text{lcm}(q_1,...,q_k)$, then, by appropriately identifying the sides of $2N$ many copies of $\Omega(D)$, we can construct a compact, connected orientable surface with finitely many conic singularities, with the coordinate changing functions given by translations.\footnote{For a more complete definition of translation surface as well as descriptions of closely related types of surfaces, please see \cite{MasTa,Gut2,HuSc,Zo} and the relevant references therein.}  In addition, if the conic angle about a conic singularity is $2\pi$, then the singularity is called a \textit{removable singularity}; if the conic angle is greater than $2\pi$, then the singularity is called a \textit{non-removable singularity}.

Consider the unit square $Q$.  Then, $N=\text{lcm}(2,2,2,2) = 2$.  In Figure \ref{fig:fourSquareSurface}, we indicate how the sides of $2N=4$ many copies of $\Omega(Q)$ must be identified so as to produce a translation surface. In this example, each corner is identified with three other corners to produce a singularity of the translation surface.  In each case, such a singularity has a conic angle of $2\pi$. This implies that the billiard flow at each vertex can be made well defined. We note that, in this case of the square, the translation surface is topologically equivalent to the flat torus.

\begin{figure}
\begin{center}
\includegraphics[scale=0.5]{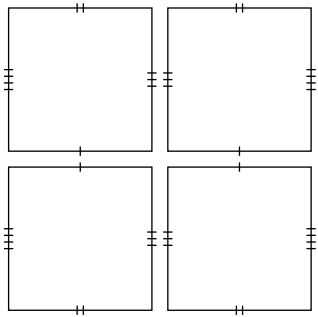}
\end{center}
\caption{A translation surface constructed from the unit square billiard $\Omega(R)$.  Opposite and parallel sides are identified.}
\label{fig:fourSquareSurface}
\end{figure}

If $\{u_1,u_2\}$ is a basis for $\mathbb{R}^2$, then a vector $z\in \mathbb{R}^2$  is called \textit{rational with respect to} $\{u_1,u_2\}$ if $z=mu_1+nu_2$, for some $m,n\in \mathbb{Z}$. Combining the results of \cite{GutJu1} with Theorem 3 of \cite{Gut2}, we can state the following result, which we do not claim as a new theorem, but which we rephrase in a way that is suitable for our purposes.\footnote{This same statement is also given in \cite{LapNie3} and is similarly attributed to E. Gutkin.}

\begin{theorem}[{\cite{Gut2}}]
\label{thm:gutkinsResult}
Let $\mathcal{S}(D)$ be a translation surface determined from a rational polygonal billiard $\Omega(D)$.  If $\mathcal{S}(D)$ is a branched cover of a singly punctured torus, then a geodesic on $\mathcal{S}(D)$ is periodic or forms a saddle connection if and only if the geodesic has an initial direction that is rational.  In addition, a geodesic on $\mathcal{S}(D)$ is dense if and only if the geodesic has an initial direction that is irrational.
\end{theorem}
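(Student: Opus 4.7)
The plan is to exploit the branched covering $\pi\colon \mathcal{S}(D) \to T$, where $T$ denotes the singly punctured flat torus with chosen basis $\{u_1,u_2\}$ for $\mathbb{R}^2$. Because a branched cover in the translation-surface category is a local isometry away from its branch points and commutes with the identification of tangent directions on different copies of $\Omega(D)$, the map $\pi$ sends geodesics to geodesics and preserves the initial direction. Hence the question reduces to the corresponding well-understood dichotomy on $T$, plus a lifting argument across $\pi$.

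First I would handle the rational direction case. Suppose $\gamma$ is a geodesic on $\mathcal{S}(D)$ whose initial direction is rational with respect to $\{u_1,u_2\}$. Then $\pi\circ\gamma$ is a geodesic on $T$ of rational slope; by a standard winding-number computation on $\mathbb{R}^2/\mathbb{Z}^2$, such a geodesic is either periodic or terminates (in forward or backward time) at the puncture of $T$. Lifting back, $\gamma$ is either periodic on $\mathcal{S}(D)$ or its maximal extension terminates at a preimage of the puncture, that is, at a conic singularity of $\mathcal{S}(D)$, in which case $\gamma$ is a saddle connection. Conversely, any periodic geodesic or saddle connection on $\mathcal{S}(D)$ pushes down to a closed geodesic or an arc between punctures on $T$, and these only exist in rational directions, giving the other implication.

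For the irrational direction case I would invoke Weyl's equidistribution theorem on the punctured torus: for irrational slope every orbit of the directional flow on $T$ is equidistributed, and in particular dense. To promote density of $\pi\circ\gamma$ to density of $\gamma$ itself, I would use the fact that $\mathcal{S}(D)$ is a finite connected branched cover of $T$ and that the monodromy of $\pi$ acts transitively on the fibers over a generic point of $T$. Combined with equidistribution downstairs, transitivity of the monodromy forces each sheet to be visited with the correct asymptotic frequency, which yields density of $\gamma$ throughout $\mathcal{S}(D)$. The reverse implication is automatic: a dense geodesic cannot have rational direction, since rational directions yield only periodic orbits or saddle connections by the first case.

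The main obstacle I anticipate is the lifting step for irrational directions. Density on the base does not lift to a general branched cover unless the covering is ``mixed'' by the monodromy; one must therefore verify transitivity of the monodromy for the specific branched covers that arise from unfolding a rational polygon, and for this I would appeal to the analysis of Gutkin--Judge relating arithmetic translation surfaces to covers of the torus. The rational case is essentially bookkeeping on closed leaves of a linear flow, but the irrational case genuinely depends on the connectedness and covering combinatorics baked into the construction of $\mathcal{S}(D)$ from $2N$ copies of $\Omega(D)$.
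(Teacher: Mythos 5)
First, a point of comparison: the paper does not prove this statement at all. It is imported (and rephrased) from Gutkin \cite{Gut2} together with \cite{GutJu1}, and the authors explicitly disclaim it as a new theorem; so there is no internal proof to measure your argument against, and your proposal has to stand on its own as a proof of Gutkin's result.

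On its own terms, the rational half of your argument is close to correct but has one slip and one omission. The slip: a preimage of the puncture under $\pi$ need not be a conic singularity of $\mathcal{S}(D)$ --- unbranched preimages are regular (or removable, cone angle $2\pi$) points through which the geodesic simply continues --- so ``terminates at a preimage of the puncture'' does not by itself produce a saddle connection. The omission: a lift of a periodic geodesic on $T$ returns to a possibly different sheet after one period, and you need the finiteness of the cover (the first-return monodromy permutation has finite order) to conclude that the lift closes up. Both are repairable by arguing via the cylinder decomposition in a rational direction. The irrational half, however, contains the genuine gap. Your key assertion --- that equidistribution of $\pi\circ\gamma$ on $T$ together with transitivity of the monodromy ``forces each sheet to be visited with the correct asymptotic frequency'' --- is false for branched covers in general: there exist connected translation surfaces that are double covers of a torus branched over \emph{two} points (the classical slit-torus constructions, going back to Veech's work on skew products over irrational rotations) on which certain irrational directions carry non-dense, non-minimal leaves, even though the monodromy is transitive. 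The reason the pathology cannot occur when the branching lies over a \emph{single} point is precisely the content of Gutkin's Theorem 3, i.e.\ it is the theorem you are trying to prove; deferring that step to ``the analysis of Gutkin--Judge'' makes the argument circular at its crux. A complete proof must actually establish minimality (or unique ergodicity) of the induced skew product or interval-exchange over the irrational rotation determined by the single branch point, or invoke the arithmetic/lattice property of these surfaces and the Veech dichotomy.
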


Theorem \ref{thm:gutkinsResult} is equivalent to the following, similarly attributed to E. Gutkin.

\begin{theorem}[{\cite{Gut2}}]
\label{thm:gutkinsResultOnBilliards}
Let $\Omega(D)$ be a rational billiard table that is tiled by an integrable billiard\footnote{If $\Omega(D)$ is a billiard table that tiles the plane, then $\Omega(D)$ is called an \textit{integrable billiard table}.} table $\Omega(P)$. Then an orbit on $\Omega(D)$ is closed if and only if the orbit has an initial direction that is rational.  In addition, an orbit on $\mathcal{S}(D)$ is dense if and only if the orbit has an initial direction that is irrational.
\end{theorem}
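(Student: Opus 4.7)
The plan is to derive Theorem \ref{thm:gutkinsResultOnBilliards} from Theorem \ref{thm:gutkinsResult} via the reflected-unfolding construction recalled in \S\ref{subsubsec:TranslationSurfacesViaRationalPolygonalBilliardTables}. The strategy is to recast the statement about billiard orbits on $\Omega(D)$ as a statement about straight-line geodesics on the associated translation surface $\mathcal{S}(D)$, and then to use the hypothesis that $\Omega(D)$ is tiled by an integrable billiard table $\Omega(P)$ to guarantee that $\mathcal{S}(D)$ is a branched cover of a singly punctured flat torus, so that Theorem \ref{thm:gutkinsResult} applies.

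First I would invoke the unfolding correspondence. Writing the interior angles of $D$ as $(p_i/q_i)\pi$ and setting $N=\operatorname{lcm}(q_1,\ldots,q_k)$, one assembles $\mathcal{S}(D)$ from $2N$ reflected copies of $\Omega(D)$ with parallel sides identified. The billiard flow on $\Omega(D)$ lifts to the straight-line geodesic flow on $\mathcal{S}(D)$, and the lift is direction-preserving modulo the finite dihedral group generated by the reflections in the sides of $\Omega(D)$. Consequently, a billiard orbit on $\Omega(D)$ is closed (respectively, dense) if and only if the corresponding geodesic on $\mathcal{S}(D)$ is closed, i.e.\ periodic or a saddle connection (respectively, dense).

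Next I would exploit the tiling hypothesis. Because $\Omega(P)$ is integrable, unfolding $\Omega(P)$ produces a flat torus $\mathcal{S}(P)=\mathbb{R}^2/\Lambda$, where $\Lambda$ is the lattice generated by the translation vectors of the planar tiling by reflected copies of $\Omega(P)$. The hypothesis that $\Omega(D)$ is tiled by $\Omega(P)$ lifts to a tiling of $\mathcal{S}(D)$ by finitely many copies of $\Omega(P)$, which in turn induces a covering map $\pi \colon \mathcal{S}(D) \to \mathcal{S}(P)$ whose ramification locus is contained in the image of those vertices of $D$ whose conic angle in $\mathcal{S}(D)$ exceeds $2\pi$. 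Hence $\mathcal{S}(D)$ is a branched cover of a singly punctured torus, satisfying the hypothesis of Theorem \ref{thm:gutkinsResult}. Applying that theorem, a geodesic on $\mathcal{S}(D)$ is closed iff its direction is rational with respect to a basis of $\Lambda$, and dense iff irrational; combined with the unfolding correspondence of the previous paragraph, and noting that the notion of rational direction for the billiard coincides with rational direction on $\mathcal{S}(D)$ precisely because $\Lambda$ is read off from the integrable tile $\Omega(P)$, this yields the stated dichotomy for orbits of $\Omega(D)$.

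The main obstacle will be the careful bookkeeping needed to show that $\pi$ really is branched only over the (finitely many) images of vertices with conic angle greater than $2\pi$, and that at most one such image is needed for Theorem \ref{thm:gutkinsResult} to apply. This requires tracking how the identifications among the $2N$ copies of $\Omega(D)$ interact with the $\Omega(P)$-sub-tiling, in particular verifying that each interior vertex of the $\Omega(P)$-tiling unfolds to a removable singularity. A secondary, essentially notational, point is to match the two a priori distinct notions of \emph{rational direction}, the geometric one on $\mathcal{S}(D)$ (with respect to $\Lambda$) and the dynamical one on $\Omega(D)$ (with respect to the lattice of the integrable tile); once the covering $\pi$ is understood these agree tautologically.
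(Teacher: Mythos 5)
The paper never proves Theorem \ref{thm:gutkinsResultOnBilliards}: it is quoted from \cite{Gut2} (in combination with \cite{GutJu1}), and the authors merely assert that it is ``equivalent'' to Theorem \ref{thm:gutkinsResult}, which is likewise cited without proof. Your proposal supplies exactly that equivalence --- unfold $\Omega(D)$ to $\mathcal{S}(D)$, use the tiling by the integrable table $\Omega(P)$ to exhibit $\mathcal{S}(D)$ as a branched cover of the flat torus $\mathcal{S}(P)=\mathbb{R}^2/\Lambda$, and transport the closed/dense dichotomy of Theorem \ref{thm:gutkinsResult} back through the unfolding correspondence --- so your route is the one the paper implicitly has in mind, just made explicit. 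The one substantive point, which you flag but do not resolve, is that Theorem \ref{thm:gutkinsResult} as stated requires a branched cover of a \emph{singly} punctured torus, whereas the conic singularities of $\mathcal{S}(D)$ sit over the vertices of the $\Omega(P)$-tiling and these may a priori project to several distinct points of $\mathbb{R}^2/\Lambda$; to close this you must either choose $\Lambda$ fine enough that all vertices of the tiling lie in a single translation orbit (as happens for the square-tiled surfaces $\mathcal{S}(S_{\mathbf{a},n})$ actually used in this paper, where every cone point is a vertex of the finest cell grid), or appeal to Gutkin's original formulation for almost integrable billiards, which does not impose the single-puncture restriction. With that point settled, your argument is correct and is the natural proof of the equivalence the paper takes for granted.
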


If $\Omega(D)$ is a rational polygon with $k$ many sides and interior angles $\frac{p_i}{q_i}\pi$, $i\leq k$ and $N=\text{lcm}\{q_1,q_2,...,q_k\}$, then the genus $g$ of a translation surface constructed from a rational polygonal billiard $\Omega(D)$ is given by\footnote{See \cite{MasTa} for an explanation of the formula describing the genus of a translation surface determined from a rational billiard table $\Omega(D)$.}

\begin{align}
\label{eqn:genus} g&=1+\frac{N}{2}\left(k-2-\sum_{i=1}^k\frac{1}{n_i}\right).
\end{align}

\subsubsection{Unfolding a billiard orbit and equivalence of flows}
\label{subsubsec:UnfoldingABilliardOrbit}
Consider a rational polygonal billiard $\Omega(D)$ and an orbit $\mathscr{O}(x^0,\theta^0)$ of $\Omega(D)$.  Reflecting the billiard $\Omega(D)$ and the orbit in the side of the billiard table containing the base point $x^1$ of the orbit ($x^1$ being the first point of collision after starting from $x^0$) partially unfolds the orbit $\mathscr{O}(x^0,\theta^0)$; see Figure \ref{fig:PartiallyUnfoldingAnOrbitOfTheSquareBilliard} for the case of the square billiard. Continuing this process until the orbit is a straight line produces as many copies of the billiard table as there are base points of the orbit; see Figure \ref{fig:UnfoldingAnOrbitOfTheSquareBilliard}. That is, if the period of an orbit $\mathscr{O}(x^0,\theta^0)$ is some positive integer $p$, then the number of copies of the billiard table in the unfolding is also $p$. We refer to such a straight line as the \textit{unfolding of the billiard orbit}.

\begin{figure}
\begin{center}
\includegraphics[scale=.35]{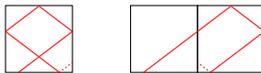}
\end{center}
\caption{Partially unfolding an orbit of the square billiard $\Omega(Q)$.}
\label{fig:PartiallyUnfoldingAnOrbitOfTheSquareBilliard}
\end{figure}

\begin{figure}
\begin{center}
\includegraphics[scale=.35]{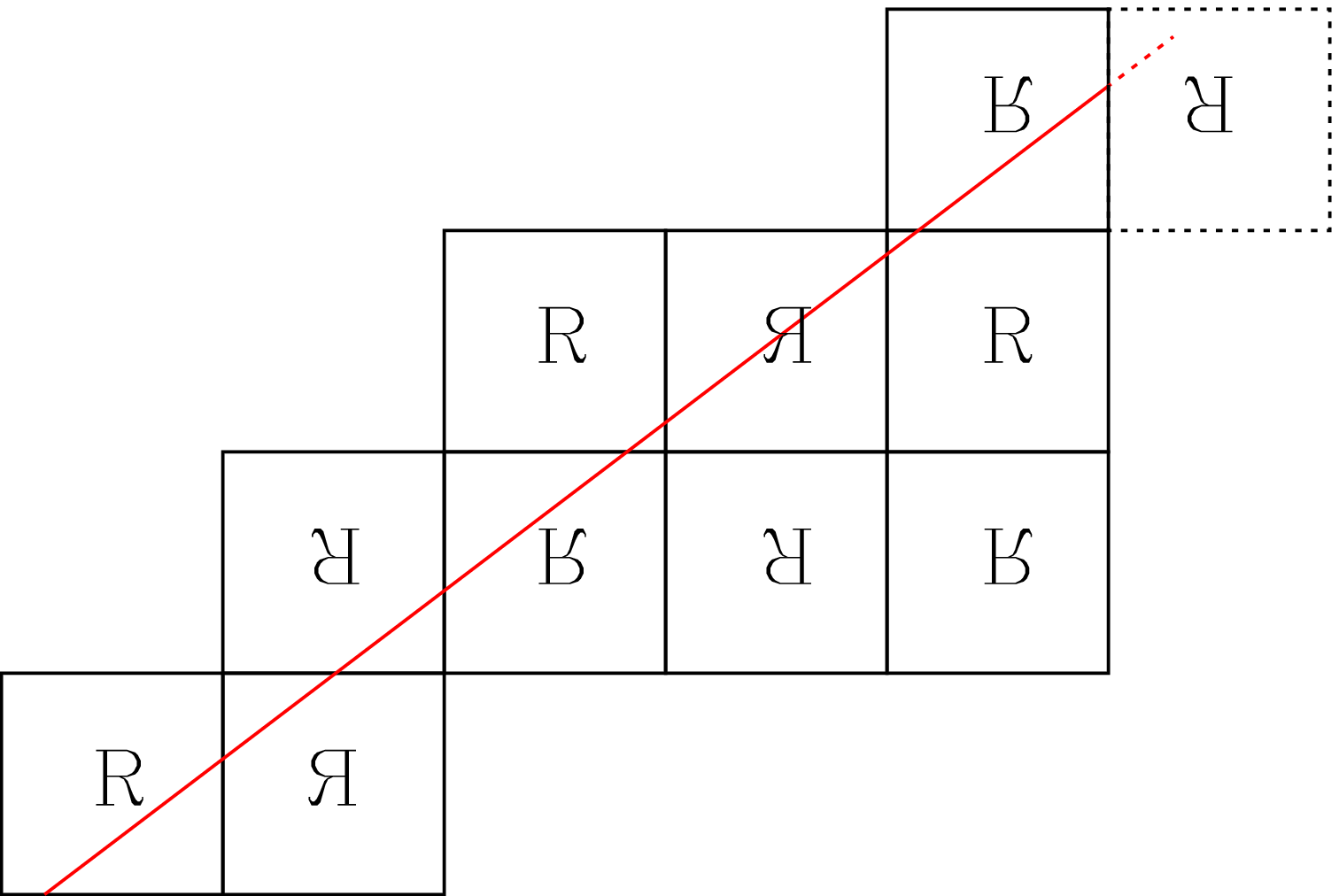}
\end{center}
\caption{Unfolding an orbit of the square billiard $\Omega(Q)$.}
\label{fig:UnfoldingAnOrbitOfTheSquareBilliard}
\end{figure}

We note that the construction of a translation surface from a rational billiard $\Omega(D)$ amounts to letting a group of symmetries act on $\Omega(D)$; see \S\ref{subsubsec:TranslationSurfacesViaRationalPolygonalBilliardTables}.  That is, a rational billiard $\Omega(D)$ can be acted on by a dihedral group $\mathscr{D}_N$ to produce a translation surface in a way that is similar to unfolding the billiard table.  Hence, we can quickly see how the billiard flow is dynamically equivalent to the geodesic flow on the associated translation surface; see Figure \ref{fig:equivalenceOfGeodesicFlow} and the corresponding caption.

\begin{figure}
\begin{center}
\includegraphics[scale=.35]{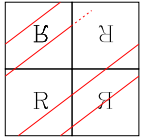}
\end{center}
\caption{Rearranging the unfolded copies of the unit square from Figure \ref{fig:UnfoldingAnOrbitOfTheSquareBilliard} and correctly identifying sides so as to recover the flat torus, we see that the unfolded orbit corresponds to a closed geodesic on the translation surface.}
\label{fig:equivalenceOfGeodesicFlow}
\end{figure}

One may modify the notion of ``reflecting'' so as to determine orbits of a billiard table tiled by a rational polygonal billiard $\Omega(D)$.  As an example, we consider the unit-square billiard table. An appropriately scaled copy of the unit-square billiard table can be tiled by the unit-square billiard table by making successive reflections in the sides of the unit square.  One may then unfold an orbit of the unit-square billiard table into a larger square billiard table. When the unfolded orbit of the original unit-square billiard intersects the boundary of the appropriately scaled (and larger) square, then one continues unfolding the billiard orbit in the direction determined by the Law of Reflection.\footnote{That is, assuming the unfolded orbit is long enough to reach a side of the larger square.}  We will refer to such an unfolding as a \textit{reflected-unfolding}.

We may continue this process in order to form an orbit of a larger scaled square billiard table. Suppose that an orbit $\mathscr{O}(x^0,\theta^0)$ has period $p$.  If $s$ is a positive integer, then we say that $\mathscr{O}^s(x^0,\theta^0)$ is an orbit that traverses the same path as $\mathscr{O}(x^0,\theta^0)$ $s$-many times.  For sufficiently large $s\in \mathbb{N}$, an orbit that traverses the same path as an orbit $\mathscr{O}(x^0,\theta^0)$ $s$-many times can be reflected-unfolded in an appropriately scaled square billiard table to form an orbit of the larger billiard table; see Figure \ref{fig:aReflectedUnfoldingInTheSquare}.  Such a tool is useful in understanding the relationship between the billiard flow on a rational polygonal billiard $\Omega(D)$ and a billiard table tiled by $\Omega(D)$, and will be particularly useful in understanding the material presented in the sequel.

\begin{figure}
\begin{center}
\includegraphics[scale=.35]{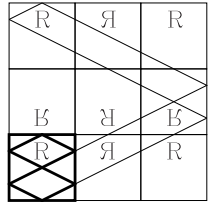}
\end{center}
\caption{Unfolding the orbit of the unit-square billiard in a (larger) scaled copy of the unit-square billiard. This constitutes an example of a reflected-unfolding of an orbit. The edges of the original unit-square billiard table and the segments comprising the orbit have been thickened to provide the reader with a clearer view of what is the orbit of the unit square and what constitutes the reflected-unfolding of the orbit of the larger square.}
\label{fig:aReflectedUnfoldingInTheSquare}
\end{figure}

\subsubsection{The billiard table $\omegasai{\mbfa}{n}$}
We note that $S_{\mbfa,n}$ does not have a connected boundary.  However, the boundary of $S_0$ and the boundary of each omitted square is connected.  In addition, the notation ``$\Omega(D)$'' indicates that $D$ is the boundary of the billiard table.  To be technically correct, $\Omega(\partial S_{\mbfa,n})$ is the proper way of referring to the prefractal billiard table.  In order to simplify notation, we refer to the prefractal billiard table by $\omegasi{n}$.



Clearly, each prefractal billiard $\omegasai{\mbfa}{n}$ can be interpreted as a square billiard with open subsquares removed.  In fact, every prefractal billiard $\omegasai{\mbfa}{n}$ is a rational billiard table.


\begin{notation}
\label{nota:AlphaForTheta}
Due to the fact that Theorem 4.1 in \cite{Du-CaTy} refers to the slope of a nontrivial line segment and we frequently refer to the main result of \cite{Du-CaTy}, we will denote the initial condition $(\xio{n},\theta_n^0)$ of an orbit of $\omegasai{\mbfa}{n}$ by $(\xio{n},\alphaio{n})$, where $\alphaio{n} = \tan(\theta_n^0)$.
\end{notation}

\begin{definition}[An orbit of the cell $\celli{k}{a^k}$ of $\omegasi{k}$]
 \label{def:AnOrbitOfACellOfOmegaSi}
 Consider the boundary of a cell $\celli{k}{a^k}$ of $\omegasi{k}$ as a barrier to the billiard flow.\footnote{Here, $C_{k,a^k}$ is a cell of the $k$th prefractal approximation $\sai{k}$, as given in Definition \ref{def:ACellOfSai} with each  $a_j$ equal to $a$.}  Then an orbit determined by reflecting in the boundary of the cell and with an initial condition contained in the cell is called an \textit{orbit of the cell $\celli{k}{a^k}$ of $\omegasi{k}$}.
\end{definition}

\begin{example}
In Figure \ref{fig:anExampleOfACell}, we saw an example of a cell of $S_{\mbf{7},1}$.  In Figure \ref{fig:anExampleOfAnOrbitOfACell}, we see an example of an orbit of the same cell.  The orbit of the cell shown has an initial condition of $((0,0),\frac{2}{3})$.
\end{example}

\begin{figure}
\includegraphics{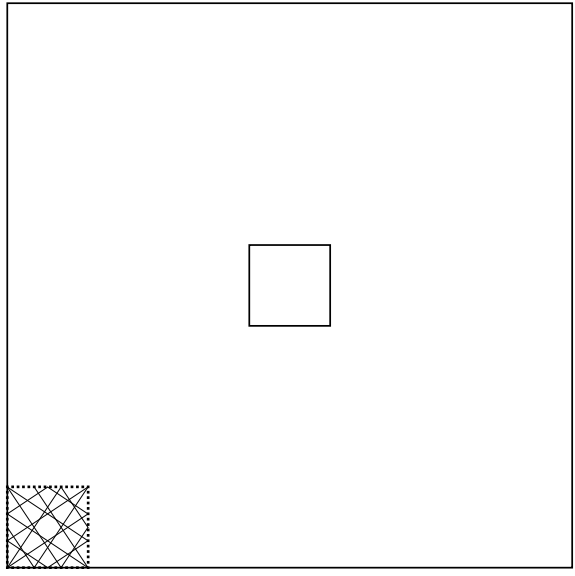}
\caption{An example of an orbit of a cell of $S_{\mbf{7},1}$.}
\label{fig:anExampleOfAnOrbitOfACell}
\end{figure}

\begin{remark}
So as to be clear, the boundary of the cell $C_{k,a^k}$ does not form a barrier to the billiard flow on $\Omega(S_{\mbfa,k})$.  Rather, we are treating the cell $\celli{k}{a^k}$ as a billiard table in its own right, embedded in the larger prefractal approximation $\omegasi{k}$.  Our motivation for doing so is found in the fact that we can proceed to reflect-unfold an orbit of a cell in $\omegasi{k}$.
\end{remark}

Recall from Definition \ref{def:ASelfSimSierpinskiCarpetViaAnIFS} that a self-similar Sierpinski carpet $S_\mbfa$ is the unique fixed point attractor of a suitably chosen iterated function system $\{\phi_{j}\}_{j=1}^{a^2-1}$ consisting of similarity contractions.  In light of this, an orbit of a cell $\celli{k}{a^k}$ of $\omegasi{k}$ is the image of an orbit $\osi{0}$ of the unit-square billiard $\Omega(S_0)$ under the action of a composition of contraction mappings $\phi_{m_k}\circ\cdots\circ\phi_{m_1}$, with $1\leq m_i\leq a^2-1$ and $1\leq i\leq k$, determined from the iterated function system $\{\phi_j\}_{j=1}^{a^2-1}$ for which $S_\mbfa$ is the unique fixed point attractor.

\subsection{Fractal billiards}
\label{subsec:fractalBilliards}
We recall various definitions from \cite{LapNie3}.  So that the reader may find the definitions more accessible, we phrase the following in terms of a self-similar Sierpinski carpet $S_\mbfa$ and a prefractal approximation $S_{\mbfa,n}$.

\begin{figure}
\begin{center}
\includegraphics[scale=.6]{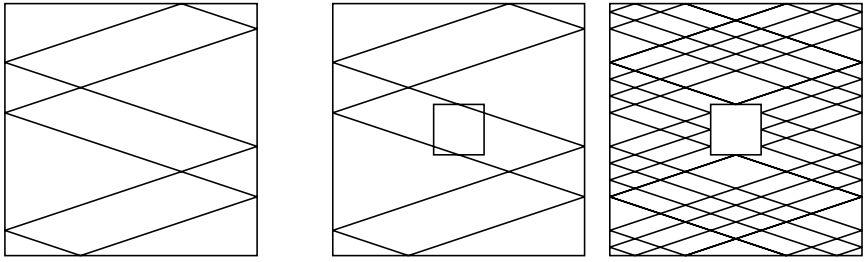}
\end{center}
\caption{In the first image on the left, we see an orbit of $\Omega(S_0)$ that has an initial condition of  $((\frac{3}{10},0),\frac{1}{3})$.  In the second image, we see that the same orbit would intersect the omitted square in the first level approximation of $\omegasa{\mbf{5}}$.  The third image is an orbit of $\Omega(S_{\mbf{5},1})$ with the same initial condition as the orbit shown in the first image.}
\label{fig:SequenceOfCompatibleOrbits}
\end{figure}


\begin{definition}[Compatible initial conditions]
\label{def:compatibleInitialConditions}
Without loss of generality, suppose that $n$ and $m$ are nonnegative integers such that $n > m$. Let $(\xio{n},\alpha_n^0)\in (\omegasai{\mbfa}{n}\times S^1)/\sim$ and $(\xio{m},\alpha_m^0)\in (\omegasai{\mbfa}{m}\times S^1)/\sim$ be two initial conditions of the orbits $\ofraciang{n}{\alpha_n^0}$ and $\ofraciang{m}{\alpha_m^0}$, respectively, where we are assuming that $\alpha_n^0$ and $\alpha_m^0$ are both inward pointing.  If $\alpha_n^0 = \alpha_m^0$ and if $\xio{n}$ and $\xio{m}$ lie on a segment determined from $\alpha_n^0$ (or $\alpha_m^0$) that intersects $\partial\sai{n}$ only at $\xio{n}$, then we say that $(\xio{n},\alpha_n^0)$ and $(\xio{m},\alpha_m^0)$ are \textit{compatible initial conditions}.
\end{definition}

\begin{remark}
\label{rmk:WhenAreOrbitsCompatible}
When two initial conditions $(\xio{n},\alpha_n^0)$ and $(\xio{m},\alpha_m^0)$ are compatible, then we simply write each as $(\xio{n},\alpha^0)$ and $(\xio{m},\alpha^0)$. If two orbits $\ofraciang{m}{\alpha_m^0}$ and $\ofraciang{n}{\alpha_n^0}$ have compatible initial conditions, then we say such orbits are \textit{compatible}.
\end{remark}

It may be the case that an initial condition $(\xio{n},\alpha^0_n)$ is not compatible with $(\xio{m},\alpha^0_m)$, for any $m<n$.  As such, in Definitions \ref{def:sequenceOfCompatibleInitialConditions} and \ref{def:SequenceOfCompatibleOrbits}, we consider sequences beginning at $i=N$, for some $N\geq 0$; see Figure \ref{fig:notCompatibleInitialConditions} and the corresponding caption.

\begin{figure}
\begin{center}
\includegraphics[scale=.55]{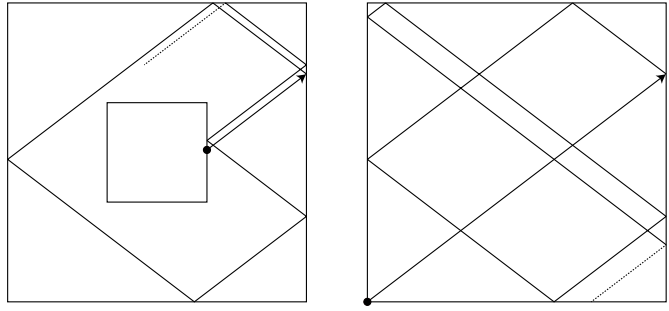}
\end{center}
\caption{In this figure, we see on the left an orbit of $\omegasai{\mbf{3}}{1}$ beginning at a point on the side of the peripheral square indicated by the small disc.  On the right, we see an orbit of the square billiard $\Omega(S_0)$ beginning at the origin $(0,0)$ (again, indicated by a small disc) with the same initial direction as the orbit on the left.  While both initial base points are collinear in the direction dictated by the initial direction of either orbit, these two orbits are not compatible because their initial conditions are not compatible.  Specifically, the line connecting the two initial base points intersects an additional point of the boundary of the billiard table, thereby preventing the initial conditions from being compatible initial conditions.}
\label{fig:notCompatibleInitialConditions}
\end{figure}

\begin{definition}[Sequence of compatible initial conditions]
\label{def:sequenceOfCompatibleInitialConditions}
Let $\{(\xio{i},\alpha_i^0)\}_{i=N}^\infty$ be a sequence of initial conditions, for some integer $N\geq 0$.  We say that this sequence is a \textit{sequence of compatible initial conditions} if for every $m\geq N$ and for every $n> m$, we have that $(\xio{n},\alpha_n^0)$ and $(\xio{m},\alpha_m^0)$ are compatible initial conditions.  In such a case, we then write the sequence as $\{(\xio{i},\alpha^0)\}_{i=N}^\infty$.
\end{definition}

\begin{definition}[Sequence of compatible orbits]
\label{def:SequenceOfCompatibleOrbits}
Consider a sequence of compatible initial conditions $\{(\xio{n},\alpha^0)\}_{n=N}^\infty$.  Then the corresponding sequence of orbits $\compseqsi{N}$ is called \textit{a sequence of compatible orbits}.
\end{definition}

If $\ofracixang{m}{\xio{m}}{\alpha_m^0}$ is an orbit of $\omegasi{m}$, then $\ofracixang{m}{\xio{m}}{\alpha_m^0}$ is a member of a sequence of compatible orbits $\compseqsiang{N}{\alpha^0}$ for some $N\geq 0$. It is clear from the definition of a sequence of compatible orbits that such a sequence is uniquely determined by the first orbit $\ofraciang{N}{\alpha^0}$.  Since the initial condition of an orbit determines the orbit, we can say without any ambiguity that a sequence of compatible orbits is determined by an initial condition $(\xio{N},\alpha^0)$.

\begin{definition}[A sequence of compatible $\mathcal{P}$ orbits]
Let $\mathcal{P}$ be a property (resp., $\mathcal{P}_1,...,\mathcal{P}_j$ a list of properties).  If every orbit in a sequence of compatible orbits has the property $\mathcal{P}$ (resp., a list of properties $\mathcal{P}_1,...,\mathcal{P}_j$), then we call such a sequence \textit{a sequence of compatible $\mathcal{P}$ \emph{(}resp., $\mathcal{P}_1,...,\mathcal{P}_j$\emph{)}  orbits}.
\end{definition}

Considering the fact that $\Omega(S_{\mbfa,n})$ is tiled by a square (i.e., an integrable billiard), Theorems \ref{thm:gutkinsResult} and \ref{thm:gutkinsResultOnBilliards} allow us to construct sequences of compatible closed orbits and sequences of compatible dense orbits.  Furthermore, under the right conditions, we will be able to construct sequences of compatible periodic orbits. That is, we will be able to construct sequences of compatible orbits in which each orbit is a nonsingular, closed orbit in its respective billiard table $\Omega(S_{\mbfa,n})$.

\section{A refinement of Theorem 4.1 in \cite{Du-CaTy}}
\label{sec:ARefinementOfTheorem4.1OfDuCaTy}
We now discuss Theorem 4.1 in \cite{Du-CaTy} and provide various examples indicating the necessity for refining the latter half of their statement.



In Part 1a of the proof of Theorem 4.1 of \cite{Du-CaTy}, it is determined that nontrivial line segments with slope $\alpha\in A$ necessarily avoid omitted squares of $S_\mbfa$ (recall that the omitted squares are the open squares removed in the construction of $S_\mbfa$).  However, in stating their result, the authors of \cite{Du-CaTy} say that such nontrivial line segments necessarily intersect corners of peripheral squares, which is more restrictive than what is concluded in the proof.  We provide an example where a nontrivial line segment beginning at $(0,0)$ (in accordance with their proof) with a slope $\alpha\in A$ does not intersect any corner of any peripheral square.  Furthermore, in the proof of Theorem 4.1 in \cite{Du-CaTy}, the authors assume in Part 1a that a nontrivial line segment with slope $\alpha\in A$ starts from the origin $(0,0)$ and in Part 1b that a nontrivial line segment with slope $\alpha \in B$ starts from $(\frac{1}{2},0)$, but they do not state this in the assumptions of their theorem. This does not invalidate their characterization of self-similar Sierpinski carpets, but can be included in the statement of the Theorem 4.1 in \cite{Du-CaTy} and further expanded upon.  As our examples will show, such assumptions are necessary to state and can actually be made more robust to allow for an explicit description of other nontrivial line segments (some of which are not maximal,\footnote{A maximal nontrivial line segment is a nontrivial line segment that connects two segments of the boundary of $S_\mbfa$ corresponding to the boundary unit square $S_0$.} but are nontrivial nonetheless).

In Remark 4.3 of \cite{Du-CaTy}, the authors indicate that one can translate the initial base point of a nontrivial line segment however one chooses (so long as the translation of the base point remains within the base of the unit square and not outside the unit square) and still maintain that the line segment starting from the new base point and having the same slope as before remains as a nontrivial line segment of $S_\mbfa$. We provide two examples for which this is not the case.

We summarize the above discussion by listing the examples indicating that the latter half of the statement in Theorem 4.1 in \cite{Du-CaTy} must be refined or stated separately as an additional theorem. These examples will be discussed in greater detail in \S \ref{sec:EventuallyConstantSequencesOfCompatiblePeriodicOrbitsOfSa}.

\begin{enumerate}
\item{A nontrivial line segment of $S_\mbf{7}$ with slope $\alpha=\frac{2}{3}\in A$ beginning at $(0,0)$ that avoids corners of every peripheral square.}
\item{A nontrivial line segment of $S_\mbf{7}$ with slope $\alpha=\frac{2}{3}\in A$ beginning at $(\frac{1}{2},0)$ that avoids corners of every peripheral square.}
\item{A nontrivial line segment of $S_\mbf{5}$ with slope $\alpha=\frac{1}{3}\in B$ beginning from $(\frac{1}{2},0)$ that, when translated to the origin $(0,0)$, constitutes a trivial line segment of $S_\mbf{5}$.}
\item{A nontrivial line segment of $S_\mbf{7}$ with slope $\alpha=\frac{3}{4}\in A$ beginning from $(0,0)$ that, when translated to $(\frac{1}{2},0)$, constitutes a trivial line segment of $S_\mbf{7}$.}
\end{enumerate}

\begin{figure}
\begin{center}
\includegraphics[scale=0.8]{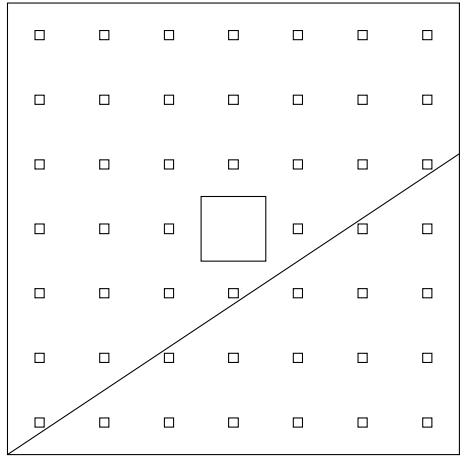}
\end{center}
\caption{In this figure, we see a nontrivial line segment beginning from the origin $(0,0)$ and having a slope of $\frac{2}{3}$.  Such a segment will avoid every peripheral square of $S_{\mbf{7}}$.}
\label{fig:2-3NTLSStartingFrom0}
\end{figure}

 The above examples indicate the necessity to distinguish between slopes in $A_{\mbf{a-2}}$ and slopes in $A_\mbfa \backslash A_{\mbf{a-2}}$. This is the key point of our Theorem \ref{thm:refinement}.

In preparation for our result, we give three lemmas.

\begin{lemma}
\label{lem:AaNotAa-2}
Let $S_\mbfa$ be a self-similar Sierpinski carpet.  Then,
\begin{align}
\notag A_\mbfa \setminus A_{\mbf{a-2}} &= \left\{\frac{p}{q} : 1\leq p < q, ~ p+q=a,~ p \text{ and } q \text{ are coprime} \right\}.
\end{align}
\end{lemma}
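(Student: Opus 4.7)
The plan is a direct double-inclusion argument relying only on the explicit description of $A_\mbfa$ in Tyson's result and on uniqueness of the lowest-terms representation of a rational; no geometric input about $S_\mbfa$ is required.

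For the inclusion $\supseteq$, I would take $p/q$ satisfying $1\leq p<q$, $p+q=a$, and $\gcd(p,q)=1$, and check directly that every defining condition of $A_\mbfa$ is satisfied (the only nontrivial point being $q=a-p\leq a-1$, which uses $p\geq 1$, together with $p+q=a$ odd since $a$ is odd). To show $p/q\notin A_{\mbf{a-2}}$, I would argue by contradiction: any representative $(p',q')$ of the rational $p/q$ must be of the form $(mp,mq)$ for some positive integer $m$ by coprimality, and the parity condition in the definition of $A_{\mbf{a-2}}$ forces $m(p+q)=ma$ to be odd, hence $m$ is a positive odd integer. Then $p'+q'=ma\geq a>a-2$, contradicting the sum bound in the definition of $A_{\mbf{a-2}}$.

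For the inclusion $\subseteq$, I would take $r\in A_\mbfa\setminus A_{\mbf{a-2}}$, reduce to lowest terms $r=p/q$, and observe that any witness $(p_0,q_0)$ to $r\in A_\mbfa$ factors as $(p_0,q_0)=(dp,dq)$ with $d\geq 1$; the parity condition then forces both $d$ and $p+q$ odd, so $p+q$ is a positive odd integer with $p+q\leq p_0+q_0\leq a$. Supposing for contradiction that $p+q\leq a-2$, and setting aside the edge case $r=0$, we have $p\geq 1$, hence $q\leq a-3$, so the pair $(p,q)$ itself verifies every condition in the definition of $A_{\mbf{a-2}}$, contradicting $r\notin A_{\mbf{a-2}}$. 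Therefore $p+q$ is odd, bounded by $a$, and strictly greater than $a-2$, which forces $p+q=a$. Coprimality is automatic from reduction, and $1\leq p<q$ follows from $p\geq 1$ together with the fact that any $A_\mbfa$-witness has $p_0<q_0$, so $r<1$ and hence $p<q$.

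The one point that needs care is the edge case $r=0$: in lowest terms this is $(0,1)$, which satisfies the $A_{\mbf{a-2}}$ conditions as soon as $a\geq 5$, so $r=0$ cannot belong to $A_\mbfa\setminus A_{\mbf{a-2}}$ in that range and the argument above proceeds cleanly; the base case $a=3$ (where $A_{\mbf{1}}$ is effectively empty) can be inspected directly. Beyond this bookkeeping and the careful tracking of parity through the scaling $(p,q)\mapsto(mp,mq)$, I do not anticipate any conceptual obstacle, since the entire statement is an elementary manipulation of the defining conditions of $A_\mbfa$ and $A_{\mbf{a-2}}$.
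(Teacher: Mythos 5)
Your proof is correct and follows essentially the same route as the paper's: both reduce to the elementary observation that an element of $A_\mbfa\setminus A_{\mbf{a-2}}$ has $p+q$ odd, $p+q\leq a$, and $p+q>a-2$ (or $q>a-3$), which forces $p+q=a$. You are in fact more careful than the paper on the one delicate point, namely that membership in $A_{\mbf{a-2}}$ is a property of the rational number and not of a particular pair $(p,q)$; the paper handles this implicitly by writing $1\leq p$ in its negated conditions without justification, whereas you track the scaling $(p,q)\mapsto(dp,dq)$ explicitly. One caveat: the ``direct inspection'' you defer for $a=3$ actually turns up a genuine exception rather than a confirmation --- $A_{\mbf{1}}=\emptyset$, so $0\in A_{\mbf{3}}\setminus A_{\mbf{1}}$ (consistent with the paper's own bolded table for $a=3$) while $0$ does not lie in the right-hand side; the stated equality therefore requires $a\geq 5$, or the exclusion of the slope $0$, a restriction the paper's proof also passes over silently.
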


\begin{proof}
We first note that if $\alpha\in A_\mbfa$, then $\alpha=\frac{p}{q}$ such that
\begin{align}
\notag p+q \leq a & \text{ and } 0\leq p< q\leq a-1.
\end{align}
\noindent Likewise, if $\frac{p}{q}\in A_{\mbf{a-2}}$, then
\begin{align}
\notag p+q\leq a-2 & \text{ and } 0\leq p < q.
\end{align}
Hence, $\frac{p}{q}\in A_\mbfa \setminus A_{\mbf{a-2}}$ satisfies either
\begin{align}
\label{eqn:Condition1}a-2<p+q \leq a &\text{ and } 1\leq p<q\leq a-1,
\end{align}
or
\begin{align}
\label{eqn:Condition2} p+q\leq a, \, 1\leq p &< q \leq a-1 \text{ and }  q>a-3,
\end{align}
\noindent in addition to $p+q$ being odd \emph{\textbf{and}} $p$, $q$ being coprime.  It can be deduced from the inequalities given on line (\ref{eqn:Condition1}) that $p+q = a$. Similarly, since $p$ and $q$ are nonnegative integers, it can be deduced from the inequalities on line (\ref{eqn:Condition2}) that $q=a-2$ or $q=a-1$, which forces $p=2$ or $p=1$, respectively. (Otherwise, $p+q$ would be even or exceed the value $a$.) The union of these two sets comprises all $\frac{p}{q}$ for which $p+q=a$, $1\leq p <q$, and $p$ and $q$ being coprime.
\end{proof}

\begin{lemma}
Let $S_\mbfa$ be a self-similar Sierpinski carpet.  If $\alpha\in B_\mbfa$, $n\geq 1$, and $r$ an odd positive integer with $r\leq 2a^n$, then the line $y=\alpha(x-\frac{r}{2a^n})$ avoids points of the plane of the form $(\frac{u}{a^l},\frac{v}{a^m})$, $u,v,l,m\in \mathbb{Z}$.
\label{lem:BAvoids}
\end{lemma}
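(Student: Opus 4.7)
The plan is to argue by contradiction. Suppose the line $y=\alpha(x-\tfrac{r}{2a^n})$ passes through some point $(\tfrac{u}{a^l},\tfrac{v}{a^m})$ with $u,v,l,m\in\Z$, and derive a parity contradiction using the three odd-parity hypotheses in force: $\alpha=\tfrac{p}{q}$ with $p,q$ both odd (from the definition of $B_\mbfa$), $r$ is odd (by assumption), and $a$ is odd (since $S_\mbfa$ is a self-similar Sierpinski carpet).

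First I would reduce to a common denominator. Both coordinates of the alleged point lie in the ring $\Z[1/a]$, so after multiplying numerator and denominator by suitable nonnegative powers of $a$ I may assume $l=m=N$ for some integer $N\geq 0$ and rewrite the point as $(\tfrac{u}{a^N},\tfrac{v}{a^N})$ with $u,v\in\Z$. Substituting into $\tfrac{v}{a^N}=\tfrac{p}{q}\bigl(\tfrac{u}{a^N}-\tfrac{r}{2a^n}\bigr)$ and clearing all denominators by multiplying through by $2qa^{N+n}$ yields the Diophantine identity
\begin{equation*}
pr\,a^{2N}\;=\;2a^{N+n}(pu-qv).
\end{equation*}

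Now both sides are integers, and I would simply reduce modulo $2$. The right-hand side is manifestly even. The left-hand side is a product of $p$, $r$, and $a^{2N}$, all of which are odd by the three parity hypotheses above (with the convention $a^0=1$). Thus the left-hand side is odd, producing the contradiction and ruling out such a point $(u/a^l,v/a^m)$ on the line.

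There is no real obstacle here; the whole argument boils down to a single parity check once the common-denominator normalization has been carried out. The only subtlety to guard against is confirming that each of the four factors $p$, $q$, $r$, $a$ is odd — $p$ and $q$ from membership in $B_\mbfa$, $r$ from the hypothesis, and $a$ from the definition of a self-similar carpet — so that clearing denominators produces an integer equation whose two sides have opposite parity.
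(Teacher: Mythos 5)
Your proof is correct and follows essentially the same route as the paper's: assume the line passes through such a point, clear denominators to obtain an integer identity, and observe that one side is odd (a product of $p$, $r$, and a power of $a$) while the other is even. The only cosmetic difference is that you first normalize to a common denominator $l=m=N$, whereas the paper keeps $l$ and $m$ separate and arrives at the equivalent identity $2qva^{n+l}=pa^m(2ua^n-ra^l)$; the parity contradiction is identical.
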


\begin{proof}
Let $S_\mbfa$ be a self-similar Sierpinski carpet billiard, $\alpha\in B_\mbfa$, $n\geq 1$, and $r$ a positive odd integer with $r\leq 2a^n$.  Suppose there exist $u,v,l,m\in \mathbb{Z}$, $l,m\geq 0$ such that $y=\alpha(x-\frac{r}{2a^n})$ intersects the point $(\frac{u}{a^l},\frac{v}{a^m})$.  Then,

\begin{align}
\frac{v}{a^m} &= \alpha\left(\frac{u}{a^l}-\frac{r}{2a^n}\right)\\
2va^{n+l} &= \alpha a^m(2ua^n-ra^l).
\end{align}

\noindent Since $\alpha = \frac{p}{q}$, $p,q\in \mathbb{Z}$, $p$ and $q$ both odd and positive, we have that

\begin{align}
2qva^{n+l} &= p a^m(2ua^n-ra^l).\label{eqn:oddNotEqualToEvenAlphaInB}
\end{align}

The left-hand side of Equation (\ref{eqn:oddNotEqualToEvenAlphaInB}) is even and the right-hand side is odd, so we have an immediate contradiction.  Therefore, no line of the form $y=\alpha(x-\frac{r}{2a^n})$ will ever intersect any point of the form $(\frac{u}{a^l},\frac{v}{a^m})$, $u,v,l,m\in \mathbb{Z}$, $m,l\geq 0$.
\end{proof}

\begin{lemma}
Let $b,c\in \mathbb{N}$, $d=\gcd(b,c)$, and $k \in \mathbb{Z}$. If $d$ divides $k$, then the equation $bx+cy=k$ has integer solutions $(x,y)$. If $(x_0,y_0)$ is one such integer solution, then all integer solutions can be expressed in the form
\begin{equation}
(x,y)=\left(x_0+ m\frac{c}{d},~ y_0 - m\frac{b}{d}\right)
\end{equation}
for some $m\in\mathbb{Z}$.
\label{lem:divisor}
\end{lemma}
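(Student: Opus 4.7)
The plan is to treat this as the classical theorem on linear Diophantine equations, whose proof has two parts: producing one integer solution, and then classifying all integer solutions in terms of that distinguished one.

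For existence, I would first invoke Bézout's identity to produce integers $u, v \in \mathbb{Z}$ with $bu + cv = d$. Such $u,v$ can be extracted constructively via the extended Euclidean algorithm applied to $b$ and $c$, but for the purposes of this lemma the mere existence is what matters. Since by hypothesis $d \mid k$, we may write $k = d\ell$ for some $\ell \in \mathbb{Z}$, and multiplying the Bézout relation through by $\ell$ yields $b(u\ell) + c(v\ell) = k$. Hence $(x_0, y_0) = (u\ell, v\ell)$ is an integer solution, which establishes the first assertion.

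For the classification of all solutions, I would fix an integer solution $(x_0, y_0)$ and let $(x, y)$ be an arbitrary integer solution. Subtracting the two equations $bx + cy = k$ and $bx_0 + cy_0 = k$ gives
\begin{equation*}
b(x - x_0) = c(y_0 - y).
\end{equation*}
Dividing both sides by $d$ produces $(b/d)(x - x_0) = (c/d)(y_0 - y)$. The crucial arithmetic fact is that $b/d$ and $c/d$ are coprime integers, so from $(c/d) \mid (b/d)(x - x_0)$ and Euclid's lemma, we conclude that $(c/d) \mid (x - x_0)$. Writing $x - x_0 = m(c/d)$ for the appropriate $m \in \mathbb{Z}$, substitution back into $b(x - x_0) = c(y_0 - y)$ and cancellation yields $y_0 - y = m(b/d)$, i.e., $y = y_0 - m(b/d)$. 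This gives the stated parametric form.

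Finally, for completeness, I would verify the converse: for every $m \in \mathbb{Z}$, the pair
\begin{equation*}
\left( x_0 + m\tfrac{c}{d},\ y_0 - m\tfrac{b}{d} \right)
\end{equation*}
satisfies $bx + cy = bx_0 + cy_0 + m(bc/d - bc/d) = k$, so all such pairs are indeed solutions. There is no genuine obstacle in this proof; the only step requiring any care is the appeal to Euclid's lemma after dividing through by $d$, since this is the point at which the definition of the greatest common divisor is actually used in an essential way. The result is entirely standard, and I would expect to present it compactly in a few lines.
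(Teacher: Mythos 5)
Your proposal is correct and is essentially the same argument the paper has in mind: the paper's proof consists of a single line invoking B\'ezout's identity (citing Theorem 1.3 of \cite{NiZuMo}) and leaving both the existence and the classification of solutions to that standard reference, whereas you have simply written out the standard details in full. Nothing in your argument diverges from the paper's intended route, and the one step needing care --- the appeal to coprimality of $b/d$ and $c/d$ together with Euclid's lemma --- is handled correctly.
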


\begin{proof}
This is an application of the elementary fact (see \emph{e.g.} Theorem 1.3 of \cite{NiZuMo}) that there exist $x_1, y_1 \in \mathbb{Z}$ such that $\gcd(b,c)=b x_1 + c y_1$.
\end{proof}

We are ready to state our our refinement of Theorem 4.1 of \cite{Du-CaTy}.

\begin{theorem}[A refinement of {Theorem 4.1 in \cite{Du-CaTy}}]
Let $\mbfa=(\frac{1}{a},\frac{1}{a},\frac{1}{a},...)$ be a constant sequence.  Then the set of slopes $\slopesa{\mbfa}$ is the union of the following two sets\emph{:}

\begin{align}
A_\mbfa &= \left\{\frac{p}{q} \,\, :\,\, p+q\leq a,\, 0\leq p< q\leq a-1,\, p,q\in \N\cup \{0\},\, p+q \text{ is odd}\right\}, \label{eqn:Aaset}\\
B_\mbfa &= \left\{\frac{p}{q} \,\,:\,\, p+q\leq a-1,\, 0\leq p\leq q\leq a-2,\, p,q\in \N,\, p \text{ and } q \text{ are odd}\right\}. \label{eqn:Baset}
\end{align}

Moreover, if $\alpha\in A_{\mbfa} \setminus A_{\mbf{a-2}}$, then each nontrivial line segment in $S_\mbfa$ with slope $\alpha$ beginning from $(0,0)$ touches vertices of peripheral squares, while if $\alpha\in A_\mbf{a-2}$, then each nontrivial line segment in $S_\mbfa$ with slope $\alpha$ beginning from $(0,0)$ is disjoint from all peripheral squares. If $\alpha\in B_\mbfa$, then each nontrivial line segment in $S_\mbfa$ with slope $\alpha$ beginning at $(\frac{r}{2a^n},0)$, $n\geq 1$ and $r<2a^n$ being a positive odd integer, is disjoint from all peripheral squares with side-length $<\frac{1}{a^n}$.

In addition, for each $\alpha\in A_\mbfa\cup B_\mbfa$, there exist maximal line segments in $S_\mbfa$ with slope $\alpha$.  Finally, if $b<a$, then any maximal nontrivial line segment in $S_\mbf{b}$ is also contained in $S_\mbfa$.  In particular, $\slopesa{\mbf{b}}\subseteq\slopesa{\mbfa}$.
\label{thm:refinement}
\end{theorem}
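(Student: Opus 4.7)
The plan is to dispatch the slope-set characterization, existence of maximal segments, and the inclusion $\slopesa{\mbf{b}}\subseteq\slopesa{\mbfa}$ directly from Tyson's Theorem 4.1, and focus on the genuinely new content: the refined distinction between $A_{\mbf{a-2}}$, $A_\mbfa\setminus A_{\mbf{a-2}}$, and $B_\mbfa$. I would organize everything around a single parametrization: at level $m$, every peripheral square of $S_\mbfa$ has corners $\bigl(\tfrac{Ia+k}{a^m},\tfrac{Ja+k'}{a^m}\bigr)$ with $I,J\in\N\cup\{0\}$ and $k,k'\in\{\tfrac{a-1}{2},\tfrac{a+1}{2}\}$, and its sides lie on the grid lines $x=\tfrac{Ia+k}{a^m}$ or $y=\tfrac{Ja+k'}{a^m}$. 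Touching a vertex or crossing a side then reduces to a single linear Diophantine condition on $(I,J)$, and the whole proof is a careful parity/modular analysis of this condition.

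For the $A_\mbfa$ assertion, substitution of a corner into $y=\tfrac{p}{q}x$ yields $a(qJ-pI)=pk-qk'$. The diagonal cases $k=k'$ give $pk-qk'=(p-q)(a\pm 1)/2$; since $\gcd(a,\tfrac{a\pm 1}{2})=1$, divisibility by $a$ would force $a\mid p-q$ and hence $p=q$, which is impossible. The off-diagonal cases $k\neq k'$ yield $a\mid p+q$, and combined with $0<p+q\leq a$ this forces $p+q=a$, i.e.\ $\alpha\in A_\mbfa\setminus A_{\mbf{a-2}}$. Lemma \ref{lem:divisor} then produces infinitely many integer pairs $(I,J)$ satisfying the equation, and for sufficiently large $m$ one finds such a pair with $0\leq I,J<a^{m-1}$, producing an actual peripheral-square vertex on the line; for $\alpha\in A_{\mbf{a-2}}$ no such solutions exist, so no vertex is touched. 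To also show that the line stays in $S_\mbfa$ for every $\alpha\in A_\mbfa$, I would rule out interior crossings of each side. Substituting the top side $y=\tfrac{Ja+(a+1)/2}{a^m}$ into the line and enforcing $\tfrac{Ia+(a-1)/2}{a^m}<x<\tfrac{Ia+(a+1)/2}{a^m}$ gives
\[
0<2a(qJ-pI)+a(q-p)+(p+q)<2p;
\]
the middle expression reduces to $a+(p+q)\pmod{2a}$, and since $p+q\leq a$ while $2p<a$, no translate by a multiple of $2a$ lands in $(0,2p)$. The remaining three sides are handled by symmetric substitutions.

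For the $B_\mbfa$ assertion, Lemma \ref{lem:BAvoids} immediately excludes the shifted line $y=\alpha(x-\tfrac{r}{2a^n})$ from every point of the form $(u/a^l,v/a^m)$, hence from every peripheral-square vertex at every level. To rule out interior side-crossings at levels $m>n$, I would mimic the $A$-case substitution: the shift contributes an additional $pra^{m-n}$, and since $a$, $p$, $r$, and $a^{m-n}$ are all odd, $pra^{m-n}\equiv a\pmod{2a}$. The resulting test quantity is again $\equiv a+(p+q)\pmod{2a}$, and because $p+q\leq a-1$ and $2p\leq p+q\leq a-1$, this residue class still misses $(0,2p)$. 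Vertex-avoidance plus side-noncrossing at all levels $m>n$ then gives disjointness from every peripheral square of side strictly less than $1/a^n$.

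The main obstacle I anticipate is the careful bookkeeping of the parity/modular analysis across the four sides of a peripheral square and across both the $A$ and $B$ regimes, especially ensuring that the boundary values of the test quantity (where it equals $0$ or $2p$) are correctly interpreted as vertex-touchings rather than as the interior crossings the argument is designed to exclude. Once the reduction modulo $2a$ is correctly performed, the key inequalities $p+q\leq a$ (respectively $p+q\leq a-1$) and $2p<a$ close the arithmetic uniformly.
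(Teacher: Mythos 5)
Your proposal is correct. For the two $A$-cases it is essentially the paper's argument: parametrize the corners of level-$m$ peripheral squares by offsets $\frac{a\pm 1}{2}$, reduce incidence with the line $y=\frac{p}{q}x$ to a linear Diophantine condition, observe that solvability forces $a\mid p+q$ and hence $p+q=a$, which by Lemma \ref{lem:AaNotAa-2} is exactly membership in $A_\mbfa\setminus A_{\mbf{a-2}}$, and then invoke Lemma \ref{lem:divisor} to exhibit an actual corner on the line in the solvable case. Your version is more uniform (all four corners at once via the $k=k'$ versus $k\neq k'$ split, where the paper treats only the inferior-right corner and appeals to symmetry) and more self-contained, since you re-derive the avoidance of the open omitted squares by the mod-$2a$ side-crossing computation, whereas the paper simply imports this fact from Part 1a of the proof of Theorem 4.1 in \cite{Du-CaTy}. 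The genuine divergence is the $B_\mbfa$ case: the paper scales the known nontrivial segment through $(\frac{1}{2},0)$ by $a^{-n}$ into the cell whose base has midpoint $(\frac{r}{2a^n},0)$, tiles the plane by that cell, and combines reflected copies with Lemma \ref{lem:BAvoids}; you instead apply Lemma \ref{lem:BAvoids} directly for vertex-avoidance and rerun the mod-$2a$ side computation on the shifted line, using $pra^{m-n}\equiv a\pmod{2a}$ for $m>n$. This is more elementary and arguably cleaner than the paper's scaling-and-tiling argument. Two bookkeeping points you should make explicit in a final write-up: the residues for the bottom and vertical sides are not literally $a+(p+q)$ (the bottom side gives $a-(q-p)$ in the $B$ regime, for instance), although the same inequalities $p+q\leq a$, resp.\ $p+q\leq a-1$, together with $q\leq a-1$ close every subcase; and in the existence half for $\alpha\in A_\mbfa\setminus A_{\mbf{a-2}}$ one must check that the cell indexed by the Diophantine solution $(I,J)$ actually survives to level $m-1$, so that the predicted corner really is a peripheral-square vertex --- this follows because the corner lies in the open cell and on a line already known to avoid all open omitted squares. (The paper's proof leaves this last point implicit as well.)
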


\begin{proof}
We know from the proof of Theorem 4.1 in \cite{Du-CaTy} that the set $\slopesa{\mbfa}$ can be partitioned as described in Equations (\ref{eqn:Aaset}) and (\ref{eqn:Baset}).  Moreover, the proof of Theorem 4.1 in \cite{Du-CaTy} is valid for the case when $\alpha\in B_\mbfa$ and the nontrivial line segment with slope $\alpha$ begins at $(\frac{1}{2},0)$; the proof also remains valid for when $\alpha\not\in\slopesa{\mbfa}$.  Hence, we focus on the cases where $\alpha\in A_\mbfa \setminus A_{\mbf{a-2}}$ and a nontrivial line segment with slope $\alpha$ begins at $(0,0)$; $\alpha\in A_{\mbf{a-2}}$ and a nontrivial line segment with slope $\alpha$ begins at $(0,0)$; and $\alpha\in B_{\mbfa}$ and a nontrivial line segment with slope $\alpha$ begins at $(\frac{r}{2a^n},0)$, for some $n\geq 1$, $r\neq a$ and $r$ an odd positive integer.

\textbf{Case 1: $\alpha \in A_\mbfa \setminus A_{\mbf{a-2}}$.} First note that the inferior right corner of a peripheral square of $S_\mbfa$ has coordinate
\begin{equation}
\left(\frac{a+1}{2a^{n+1}},\frac{a-1}{2a^{n+1}}\right) +\left(\frac{u}{a^n},\frac{v}{a^n}\right)
\label{eq:coordinfrightcorner}
\end{equation}
\noindent for some $u$,$v \in \mathbb{N}\cup \{0\}$, with $0\leq u,v\leq a^n-1$ and $u\neq v$.  We wish to show that  any line segment emanating from $(0,0)$ with slope $\alpha=\frac{p}{q} \in A_\mbfa \setminus A_{\mbf{a-2}}$ hits the inferior right corner of some peripheral square: that is, there exist $0\leq u,v\leq a^n-1$ such that the equation
\begin{equation}
\frac{1}{q} \left(u+ \frac{a+1}{2a}\right) = \frac{1}{p}\left(v+\frac{a-1}{2a}\right)
\label{eq:line}
\end{equation}
holds.

By Lemma \ref{lem:AaNotAa-2}, $p+q=a$ for any $\frac{p}{q} \in A_\mbfa \setminus A_{\mbf{a-2}}$, so Equation (\ref{eq:line}) can be rewritten as
\begin{align}
2(a-q) u - 2q v &= 2q-a-1.
\label{eq:line2}
\end{align}
Geometrically speaking, the next step  is to find, in the $uv$-plane (regarding $u$ and $v$ as real-valued rather than integer-valued), the intersections between the line defined by Equation (\ref{eq:line2}) and the lattice $\{(u,v):u,v\in \mathbb{Z} \cap [0,a^n-1], u\neq v\}$. Observe that the line defined by Equation (\ref{eq:line2}) has $v$-intercept $\frac{a+1}{2q}-1$, which is between $-\frac{1}{2} +\frac{1}{a-1}$ and $0$ (because $\frac{a+1}{2} \leq q \leq a-1$ by Lemma \ref{lem:AaNotAa-2}). If $a+1-2q=0$ (or $p=q-1$), then immediately $(u,v)=(0,0)$ is an intersection point. If not, we move along the line in integer increments of $u$, and check whether or not the $v$-coordinate is an integer. The successive $v$-coordinates can be written as
\begin{align}
v &= \left(\frac{a+1}{2q}-1\right) + u\left(\frac{a-q}{q}\right) \\
  &= \frac{(a+1)+2au}{2q} - (u+1),
\end{align}
\noindent with $u\in \mathbb{Z} \cap [0,a^n-1]$.  Thus it is enough to find a $u\in \mathbb{Z} \cap [0,a^n-1]$ such that $q$ divides $\left(\frac{a+1}{2} +au\right)$ (note that $a+1$ is even). An equivalent way to say this is that we're looking for integer solutions $(u,r)$ of the equation
\begin{equation}
au+qr = -\frac{a+1}{2}.
\label{eq:integersolutioneqn}
\end{equation}
Using that $\gcd(a,q) = \gcd(p,q)=1$ by Lemma \ref{lem:AaNotAa-2}, we apply Lemma \ref{lem:divisor} to deduce that there is a unique pair of integers  $(u^*,r^*)\in \{0,1,\cdots, q-1\} \times \mathbb{Z}$ which solves (\ref{eq:integersolutioneqn}): then $q$ divides $\left(\frac{a+1}{2} + au^*\right)$. Denote $v^* = \left(\frac{a+1}{2q}-1\right) + u^* \left(\frac{a-q}{q}\right)$ and note that $v^* < u^*$. It follows that the line segment emanating from $(0,0)$ with slope $\frac{a-q}{q} \in A_{\mbfa} \setminus A_{\mbf{a-2}}$ hits the inferior right corner of a peripheral square whose coordinate is $\frac{1}{a^n}(u^*+\frac{a+1}{2a} , v^*+\frac{a-1}{2a})$.

\textbf{Case 2: $\alpha \in A_{\mbf{a-2}}$.} Consider a nontrivial line segment with slope $\alpha$ beginning at $(0,0)$.  We suppose the nontrivial line segment intersects a peripheral square at a corner, whose coordinate is given by (\ref{eq:coordinfrightcorner}). Then, if $\alpha = \frac{p}{q} \in A_{\mbf{a-2}}$, we have

\begin{equation}
\frac{1}{q} \left(u+ \frac{a+1}{2a}\right) = \frac{1}{p}\left(v+\frac{a-1}{2a}\right)\!,
\end{equation}
which reduces to
\begin{equation}
\label{eqn:integerNotAFraction} q+2vq-p-2up =\frac{p+q}{a}.
\end{equation}

\noindent While the left-hand side of Equation (\ref{eqn:integerNotAFraction}) is an integer, the right-hand side is not, since $p+q\leq a-2 <a$.  This contradiction then implies that the nontrivial line segment cannot intersect any lower-right corner of any peripheral square (and, by symmetry, any upper-left corner of any peripheral square).

\textbf{Case 3: $\alpha \in B_\mbfa$.} Consider a nontrivial line segment of $S_\mbfa$ with slope $\alpha\in B_\mbfa$ emanating from $(\frac{1}{2},0)$, the existence of which follows from the proof of Theorem 4.1 in \cite{Du-CaTy}. We claim that a line segment emanating from $(\frac{r}{2a^n},0)$ with slope $\alpha$ will also be a nontrivial line segment of $S_\mbfa$ and never intersect the sides of any peripheral square with side-length $\frac{1}{a^k}$, $k\geq n$.

    To such end, consider $n\geq 0$, and $r$ an odd positive integer with $r\leq 2a^n$.  We scale the nontrivial line segment emanating from $(\frac{1}{2},0)$ by $\frac{1}{a^n}$. Then the scaled nontrivial line segment is a nontrivial line segment of the cell.  Then, there exists $(c\leq a^n,0)$ such that translating the line segment of the cell by $\frac{c}{a^n}$ results in a nontrivial line segment of a cell with $(\frac{r}{2a^n},0)$ as the midpoint of the base of the cell.  Moreover, the translated nontrivial line segment emanates from $(\frac{r}{2a^n},0)$ with slope $\alpha$.

    We now consider a tiling of the plane by the cell having $(\frac{r}{2a^n},0)$ as the midpoint of the base.  Then, a line $y=\alpha(x-\frac{r}{2a^n})$ avoids every peripheral square in the tiling.  If not, then the resulting line must also intersect a side of a peripheral square contained in the cell that does not correspond to a corner of the peripheral square by virtue of Lemma \ref{lem:BAvoids}.  Since $S_\mbfa$ is a self similar Sierpinski carpet, any line segment with slope $\alpha \in B_\mbfa$ beginning from $(\frac{r}{2a^n},0)$ must, by construction, be a trivial line segment in $S_\mbfa$.

\end{proof}

\section{Eventually constant sequences of compatible periodic orbits of $S_\mbfa$}
\label{sec:EventuallyConstantSequencesOfCompatiblePeriodicOrbitsOfSa}
In this section, we prove that the examples described in Items (1)--(4) in \S\ref{sec:ARefinementOfTheorem4.1OfDuCaTy} do in fact indicate a necessity for refining the latter half of the statement made in Theorem 4.1 in \cite{Du-CaTy}.  In so doing, we will provide proper motivation for our main results stated in Theorem \ref{thm:sequenceOfCompatiblePeriodicOrbits} and Proposition \ref{prop:compatibleclosedorbits}.  Finally, we will determine a family of periodic orbits of self-similar Sierpinski carpets.

We first note that the corners of the square billiard table constitute removable singularities of the billiard flow.  That is, reflection can be defined in the corners of the square in a well-defined manner.  Moreover, in every prefractal approximation of a Sierpinski carpet, reflection in the corners of the original square boundary can be made well-defined, as well.  Specifically, a billiard ball entering into a corner of the original square of any prefractal approximation of a Sierpinski carpet will exit the corner in a direction that makes an angle $\gamma$ that constitutes the reflection of the incoming angle $\theta$ as reflected through the angle bisector of the vertex; see Figure \ref{fig:ReflectionDefinedInCorners}.

\begin{figure}
\begin{center}
\includegraphics[scale=.35]{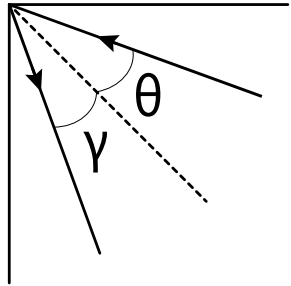}
\end{center}
\caption{The corners of a square billiard table constitute removable singularities of the billiard flow on the square billiard table $\Omega(S_0)$; see \S\ref{subsubsec:TranslationSurfacesViaRationalPolygonalBilliardTables} for a discussion of removable and nonremovable singularities of a translation surface.  Consequently, every corner of a prefractal approximation of a Sierpinski carpet $S_\mbfa$ corresponding to a corner of the original square constitutes a removable singularity of the billiard flow on $\omegasai{\mbfa}{n}$.}
\label{fig:ReflectionDefinedInCorners}
\end{figure}


In Item (1) of \S\ref{sec:ARefinementOfTheorem4.1OfDuCaTy}, we stated that a nontrivial line segment of $S_\mbf{7}$ with an initial starting point of $(0,0)$ and a slope $\alpha=\frac{2}{3}$ avoids corners of every peripheral square of $S_\mbf{7}$.  Consider an orbit $\osixang{0}{(0,0)}{\frac{2}{3}}$ of the square billiard table $\Omega(S_0)$.  We claim that this orbit avoids corners of every peripheral square of $\Omega(S_\mbf{7})$.


\begin{figure}
\begin{center}
\includegraphics{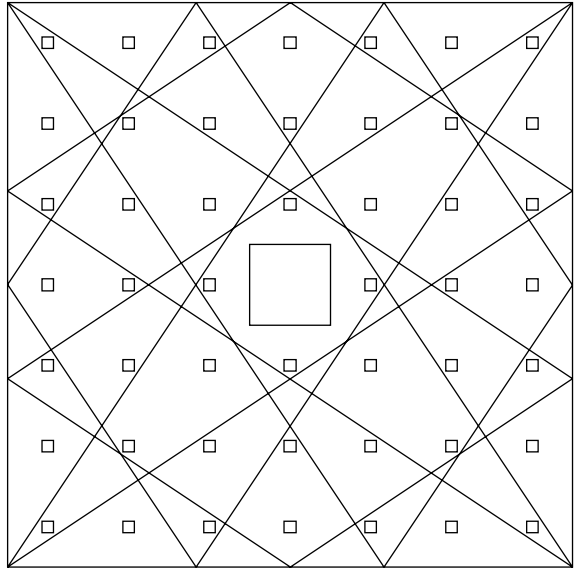}
\end{center}
\caption{In this figure, we see an orbit that avoids peripheral squares of $\Omega(S_{\mbf{7},2})$, the second level approximation of the self-similar Sierpinski carpet billiard table $\Omega(S_{\mbf{7}})$. As noted in the text, the fact that the orbit intersects corners does not pose a problem for determining the trajectory in a well-defined way.}
\label{fig:2-3orbitStartingFrom1-2}
\end{figure}

To such end, consider the orbit shown in Figure \ref{fig:2-3orbitStartingFrom1-2}, which has an initial condition $((0,0),\alpha)$, $\alpha = \frac{2}{3}$.  It is clear that this orbit avoids peripheral squares of $\Omega(S_{\mbf{7},2})$.  Then, scaling $\Omega(S_{\mbf{7},2})$ and the orbit together, relative to the origin, we see that $\frac{1}{7}\mathscr{O}((0,0),\alpha)$ avoids peripheral squares of the cell $\celli{1}{a}$ (i.e., the cell with side length $\frac{1}{7}$ and lower left corner corresponding to the origin); see Figure \ref{fig:2-3orbitStartingFrom1-2scaled}.  Not only does the orbit of the cell avoid the peripheral square with side-length $\frac{1}{49}$, it also avoids the peripheral squares with side length $\frac{1}{7^3}$, since the orbit $\mathscr{O}((0,0),\alpha)$ avoided every peripheral square of length $\frac{1}{49}$ in $\Omega(S_{\mbf{7},2})$.

\begin{figure}
\begin{center}
\includegraphics{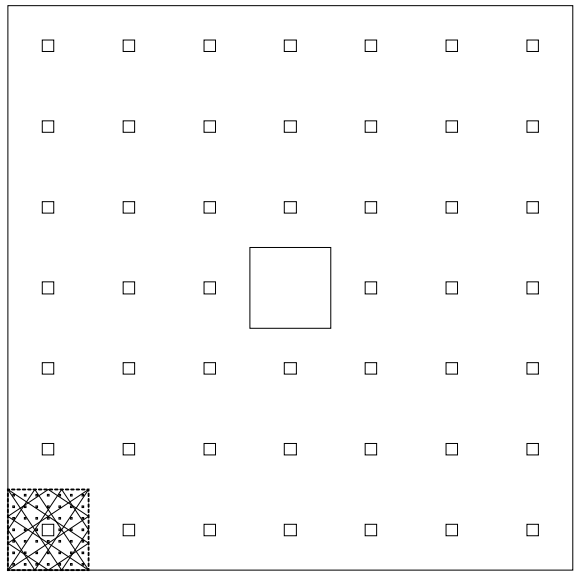}
\end{center}
\caption{We see here the billiard table $\Omega(S_{\mbf{7},2})$ and the orbit scaled by $\frac{1}{7}$. Such a scaled orbit is then an orbit of the cell $\celli{1}{a}$.  We then proceed to reflect-unfold this orbit to recover the orbit before scaling; see Figure \ref{fig:unfoldingTheScaledOrbit}.}
\label{fig:2-3orbitStartingFrom1-2scaled}
\end{figure}

Now, the reflected-unfolding of the orbit of the cell $\celli{1}{a}$ must follow the same path as $\mathscr{O}((0,0),\alpha)$ on account of both orbits having the same initial condition and reflection being an isometry; see Figure \ref{fig:unfoldingTheScaledOrbit}. This then means that $\mathscr{O}((0,0),\alpha)$ is an orbit of $\Omega(S_{\mbf{7},3})$ that avoids all peripheral squares of side-length $\frac{1}{7^3}$.

\begin{figure}
\begin{center}
\includegraphics{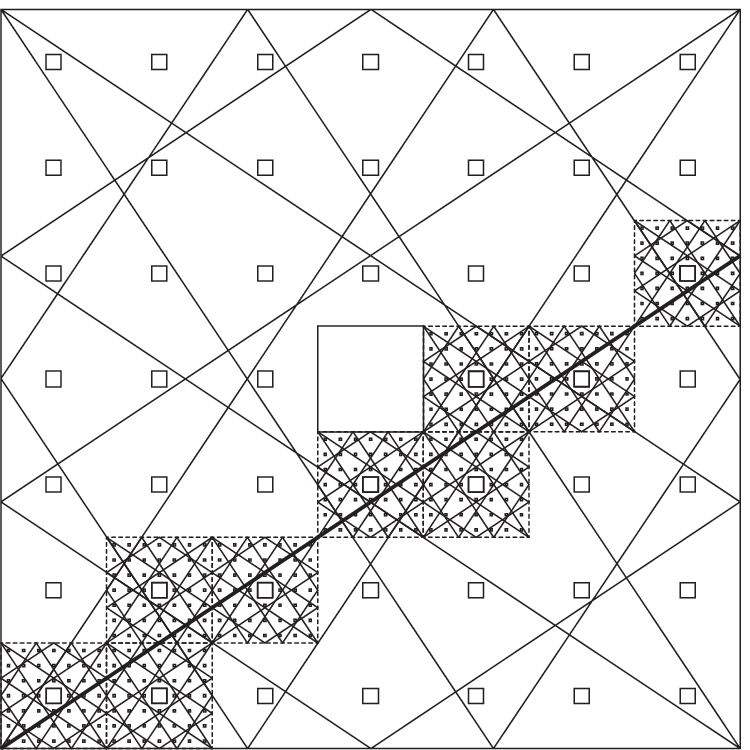}
\end{center}
\caption{In this figure, we see the scaled orbit unfolded.  Rather, we see $\frac{1}{7}\mathscr{O}^s((0,0),2/3)$ unfolded, for sufficiently large $s$.  Indeed, we see that we can recapture the original orbit $\mathscr{O}((0,0),2/3)$ by reflecting-unfolding the scaled orbit.  Moreover, reflecting-unfolding is an isometry, so no peripheral squares are intersected.}
\label{fig:unfoldingTheScaledOrbit}
\end{figure}

Suppose there exists $N\in \N$, $N\geq 3$ such that for every $n\leq N$, the orbit $\mathscr{O}((0,0),\alpha)$ avoids corners of peripheral squares with side-length $\frac{1}{7^n}$.  Then, scaling $\mathscr{O}((0,0),\alpha)$ together with $\Omega(S_{\mbf{7},N})$ by $\frac{1}{7}$, we see that the orbit of the cell $\celli{1}{k}$ avoids the peripheral square of side-length $\frac{1}{7^{N}}$ since $\mathscr{O}((0,0),\alpha)$  avoids peripheral squares of side length $\frac{1}{7^n}$,  for all $n\leq N$.  In addition, $\frac{1}{7}\mathscr{O}((0,0),\alpha)$ avoids peripheral squares of side-length $\frac{1}{7^{N+1}}$ contained in the cell, since $\mathscr{O}((0,0),\alpha)$  avoided peripheral squares of side-length $\frac{1}{7^N}$ (recall that peripheral squares of side-length $\frac{1}{7^N}$ are scaled by $\frac{1}{7}$ when constructing $S_{\mbf{7},N+1}$ from $S_{\mbf{7},N}$ via an iterated function system comprised of contraction mappings).

We may then reflect-unfold the orbit $\frac{1}{7}\mathscr{O}((0,0),\alpha)$ of the cell in $\Omega(S_{\mbf{7},N+1})$. The reflected-unfolded orbit $\frac{1}{7}\mathscr{O}((0,0),\alpha)$ must then retrace the exact path of $\mathscr{O}((0,0),\alpha)$ in $\Omega(S_{\mbf{7},N})$, but now in $\Omega(S_{\mbf{7},N+1})$.  Since reflection is an isometry, the reflected-unfolding of the orbit of the cell must then avoid peripheral squares of $\Omega(S_{\mbf{7},N+1})$.

Therefore, we have concluded that $\mathscr{O}((0,0),\alpha)$  must avoid all peripheral squares of $\Omega(S_{\mbf{7},n})$, for every $n\geq 1$.

In verifying that the example in Item (1) of \S\ref{sec:ARefinementOfTheorem4.1OfDuCaTy} constitutes a counter example to one aspect of the latter half of Theorem 4.1 of \cite{Du-CaTy}, we see how we can quickly build upon it to verify that Item (2) of \S\ref{sec:ARefinementOfTheorem4.1OfDuCaTy} is also a counter example to another aspect of the latter half of the main result in \cite{Du-CaTy}.  If $\alpha=\frac{2}{3}\in A_\mbf{5}\subseteq A_\mbf{7}$, then a nontrivial line segment of $S_\mbf{7}$ beginning at $(\frac{1}{2},0)$ with slope $\alpha$ necessarily avoids all peripheral squares of $S_\mbf{7}$.  A similar reasoning as above can be used to demonstrate the validity of this statement.  In fact, the path traversed by the orbit $\osixang{n}{(0,0)}{\frac{2}{3}}$ is identical to the path traversed by the orbit $\osixang{n}{(\frac{1}{2},0)}{\frac{2}{3}}$, for every $n\geq 0$.

The third example referred to in Item (3) of \S\ref{sec:ARefinementOfTheorem4.1OfDuCaTy} consists of a nontrivial line segment of $S_\mbf{5}$ beginning at $(\frac{1}{2},0)$ with a slope $\alpha =\frac{1}{3}\in B_\mbf{5}$.  As illustrated in Figure \ref{fig:S5TrivialLineSegment}, when a nontrivial line segment beginning at $(\frac{1}{2},0)$ with slope $\alpha=\frac{1}{3}$ is translated to $(0,0)$ and extended so as to intersect the right side of the portion of the boundary corresponding to the unit square $S_0=Q$, we see that the new segment intersects omitted squares.  Since $S_\mbf{7}$ is self-similar, such a segment will intersect infinitely many omitted squares. Hence, the new segment must be a trivial line segment of $S_\mbf{5}$.

\begin{figure}
\begin{center}
\includegraphics[scale=.95]{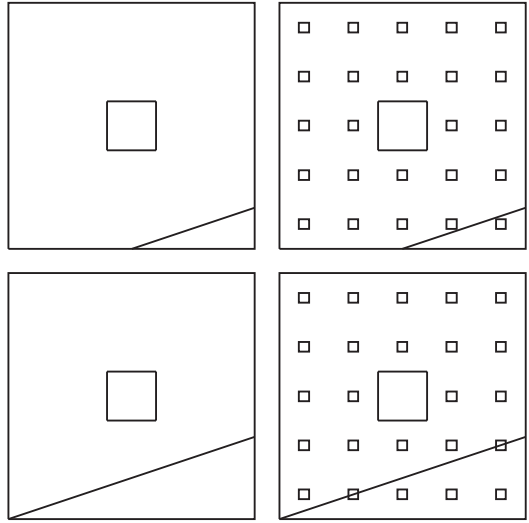}
\end{center}
\caption{An example of a nontrivial line segment of $S_\mbf{5}$ with slope $\alpha=\frac{1}{3} \in B_\mbf{5}$ starting from $\left(\frac{1}{2},0\right)$, which becomes trivial when translated to the base point $(0,0)$. }
\label{fig:S5TrivialLineSegment}
\end{figure}

Our fourth example referred to in Item (4) of \S\ref{sec:ARefinementOfTheorem4.1OfDuCaTy} consists of a nontrivial line segment of $S_\mbf{7}$ beginning at $(0,0)$ with a slope $\alpha=\frac{3}{4}$.  Consider the translation of the nontrivial line segment with slope $\alpha = \frac{3}{4}$ from $(0,0)$ to $(\frac{1}{2},0)$.  As illustrated in Figure \ref{fig:S7TrivialLineSegment}, such a segment intersects omitted squares of $S_\mbf{7}$.  Since $S_\mbf{7}$ is self-similar, it must be the case that the translated segment intersects infinitely many omitted squares of $S_\mbf{7}$. Hence, a segment starting from $(\frac{1}{2},0)$ with a slope $\alpha = \frac{3}{4}$ must be a trivial line segment.

\begin{figure}
\begin{center}
\includegraphics{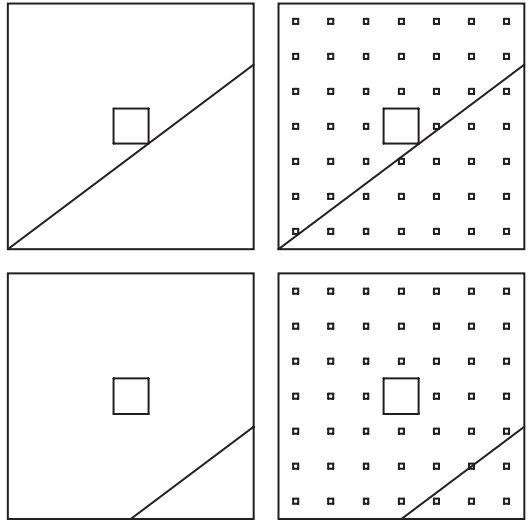}
\end{center}
\caption{An example of a nontrivial line segment of $S_\mbf{7}$ with slope $\alpha = \frac{3}{4} \in A_\mbf{7}$ starting from $(0,0)$, which becomes trivial when translated to the base point $\left(\frac{1}{2},0\right)$.}
\label{fig:S7TrivialLineSegment}
\end{figure}


\begin{definition}[Constant sequence of compatible orbits]
Given a nonnegative integer $N$, we say that a sequence of compatible orbits $\compseqsi{N}$ is a \textit{constant sequence of compatible orbits} if the path traversed by $\ofraci{n+1}$ is identical to the path traversed by $\ofraci{n}$, for every $n\geq N$.  Furthermore, we say that a sequence of compatible orbits $\compseqsi{0}$ is \textit{eventually constant} if there exists a nonnegative integer $N$ such that $\compseqsi{N}$ is constant, in the above sense.
\end{definition}

\begin{theorem}
\label{thm:sequenceOfCompatiblePeriodicOrbits}
Let $\alpha \in \slopesa{\mbfa}$.
\begin{enumerate}
\item{If $\alpha\in A_{\mbf{a-2}}$, then the sequence of compatible orbits $\compseqsixang{0}{(0,0)}{\alpha}$ is a sequence of compatible periodic orbits.}
\item{If $\alpha\in B_\mbfa$ and $r<2a^n$ is an odd positive integer, then the sequence of compatible orbits $\compseqsixang{0}{(\frac{r}{2a^n},0)}{\alpha}$ is a sequence of compatible periodic orbits.}
\end{enumerate}

Furthermore, in each case, the sequence of compatible periodic orbits is eventually constant, and its trivial limit constitutes a periodic orbit of a self-similar Sierpinski carpet billiard table $\Omega(S_\mbfa)$.
\end{theorem}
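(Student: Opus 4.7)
The plan is to proceed by induction on the prefractal level $n$ in both cases, using Theorem \ref{thm:refinement} as the base-level avoidance statement and bootstrapping via the scaling-and-reflected-unfolding technique introduced in \S\ref{subsubsec:UnfoldingABilliardOrbit}. At each level, the goal is to show that $\mathscr{O}_n$ traces the same path as a fixed reference orbit, avoids the interiors of the peripheral-square obstacles newly introduced at that level, and is therefore a nonsingular periodic orbit. Once the sequence is shown to be eventually constant, its trivial limit is a periodic orbit of $\Omega(S_\mbfa)$ by definition.

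For Case~1 ($\alpha\in A_{\mbf{a-2}}$, $x^0=(0,0)$), the strategy mirrors the worked example for $\alpha=2/3$ in $S_\mbf{7}$ carried out in the discussion preceding the theorem. First note that $(0,0)$ is a removable singularity of every $\Omega(S_{\mbfa,n})$, so $\mathscr{O}_0$ is just the unit-square orbit with rational slope, and hence periodic by the standard unfolding of the square billiard. The inductive hypothesis is that $\mathscr{O}_n$ avoids every peripheral square of $S_{\mbfa,n}$ and has the same path as $\mathscr{O}_0$. For the inductive step, scale $\mathscr{O}_n$ by $1/a$ about the origin to obtain an orbit of the corner cell $C_{1,a}$. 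The self-similarity of $S_\mbfa$ combined with the inductive hypothesis ensure that this scaled orbit avoids all peripheral squares of sizes $1/a^2,\ldots,1/a^{n+1}$ inside $C_{1,a}$, while Theorem \ref{thm:refinement} Case~2 applied to the initial segment guarantees avoidance of the level-$1$ peripheral squares of $S_\mbfa$ outside $C_{1,a}$. Reflected-unfolding the scaled orbit of $C_{1,a}$ in $\Omega(S_0)$ recovers $\mathscr{O}_n$ itself, and since reflections are isometries that permute the pattern of peripheral squares of $S_{\mbfa,n+1}$ (via the $D_4$ symmetries of $S_\mbfa$ across the midlines $x=1/2$ and $y=1/2$), $\mathscr{O}_n$ avoids all peripheral squares of $S_{\mbfa,n+1}$. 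The sequence is therefore constant from $n=0$.

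For Case~2 ($\alpha\in B_\mbfa$, $x^0=(\frac{r}{2a^n},0)$), the argument is analogous but the sequence is only constant from some level $N\geq n$. The reference orbit is $\mathscr{O}_n$ in $\Omega(S_{\mbfa,n})$, which may legitimately reflect off peripheral squares of sizes $1/a,\ldots,1/a^n$ but, by Theorem \ref{thm:refinement} Case~3 applied to the initial segment and extended to each subsequent segment via reflected-unfolding, avoids all peripheral squares of side $<1/a^n$. Lemma \ref{lem:BAvoids} gives avoidance of every grid point $(u/a^l,v/a^m)$, and hence every corner of every peripheral square, so the orbit is nonsingular at all levels. Consequently, for every $k\geq n$, $\mathscr{O}_k$ avoids all peripheral squares of side $<1/a^n$ and so interacts with exactly the same obstacle set as $\mathscr{O}_n$, making the path of $\mathscr{O}_k$ independent of $k\geq n$. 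Periodicity at each level follows from the rationality of $\alpha$ via Theorem \ref{thm:gutkinsResultOnBilliards}, combined with the avoidance of peripheral-square corners.

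The main technical obstacle in both cases is justifying the reflected-unfolding step rigorously: showing that the peripheral-square pattern in the reflected cells of the tiling (or equivalently, in each cell of a reflected-unfolded larger prefractal) coincides with the peripheral-square pattern of $S_{\mbfa,n+1}$. This rests on the $D_4$ symmetries of $S_\mbfa$ and, for Case~2, on a number-theoretic check that reflections of the starting point $(\frac{r}{2a^n},0)$ across the edges of $\partial S_0$ (and across the sides of large peripheral squares) yield points whose relevant coordinate retains the odd-numerator-over-$2a^n$ form modulo the appropriate unfolding reductions, so that Theorem \ref{thm:refinement} Case~3 can be invoked on each segment. Once these symmetry and number-theoretic verifications are in place, the inductive step closes and both cases, along with the eventually-constant property, follow.
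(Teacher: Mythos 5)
Your proposal follows essentially the same route as the paper's proof: induction on the prefractal level, using Theorem \ref{thm:refinement} for the base avoidance of peripheral squares, the scale-into-a-cell-and-reflected-unfold argument for the inductive step, and Lemma \ref{lem:BAvoids} to rule out corner collisions in the $B_\mbfa$ case, concluding eventual constancy and hence periodicity of the trivial limit. The only cosmetic differences (explicitly invoking Theorem \ref{thm:gutkinsResultOnBilliards} for periodicity and naming the $D_4$ symmetry behind the reflected-unfolding) do not change the argument.
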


\begin{proof}
\noindent\textbf{Part 1}:  Suppose $\alpha\in A_{\mbf{a-2}}$ and an orbit of $\Omega(S_0)$ has an initial condition $((0,0),\alpha)$. Consider an orbit of $\osixang{1}{(0,0)}{\alpha}$ of $\omegasai{\mbfa}{1}$.  We claim that such an orbit avoids the peripheral square of $\Omega(S_{\mbfa,1})$.   If not, then unfolding the orbit in the tiling of the plane by $\Omega(S_{\mbfa,1})$ must intersect a peripheral square in the tiling.  Then, appropriately scaling the unfolded orbit and the tiling by a factor of $\frac{1}{a^n}$ results in a nontrivial line segment intersecting a peripheral square.  In Theorem \ref{thm:refinement}, we showed that a nontrivial line segment with a slope $\alpha\in A_{\mbf{a-2}}$ cannot intersect any point of a peripheral square of any approximation $S_{\mbfa,m}$, $m\geq 1$.  Now suppose there exists $N>0$ such that an orbit $\osixang{n}{(0,0)}{\alpha}$ avoids corners of peripheral squares of $\omegasai{\mbfa}{n}$, for all $n\leq N$.  Then, scaling $\omegasai{\mbfa}{N}$ and the orbit by $\frac{1}{a}$ results in an orbit of a cell with side length $\frac{1}{a}$.  By construction, the orbit of the cell avoids all peripheral squares contained in the cell, meaning the orbit avoids peripheral squares with side-length $\frac{1}{a^k}$, for all $k\leq N+1$. Then, reflecting-unfolding the orbit of the cell, as well as the peripheral squares of the cell, we see that the reflected-unfolded orbit $\mathscr{O}^s_{N+1}((0,0),\alpha)$ of $\omegasai{\mbfa}{N+1}$ avoids all peripheral squares. Hence, for all $n\geq 1$, the orbit $\osixang{n}{(0,0)}{\alpha}$ avoids peripheral squares of $\omegasai{\mbfa}{n}$.

\vspace{2 mm}

\noindent\textbf{Part 2}: Let $\alpha\in B_\mbfa$ and $(\xio{0},\yio{0})=(\frac{1}{2},0)$.  Then, the orbit $\osixang{1}{(\frac{1}{2},0)}{\alpha}$ avoids the  peripheral square of $\omegasai{\mbfa}{1}$. This follows from the fact that the unfolded orbit must avoid peripheral squares of the tiling of the plane by $\omegasai{\mbfa}{1}$.  Moreover, for every $m\geq 1$, $\osixang{n}{(\frac{1}{2},0)}{\alpha}$ must avoid peripheral squares of $\omegasai{\mbfa}{m}$ and $\osixang{m}{(\frac{1}{2},0)}{\alpha}$ is identical to $\osixang{0}{(\frac{1}{2},0)}{\alpha}$.  Therefore, the compatible sequence of periodic orbits $\compseqsixang{0}{(\frac{1}{2},0)}{\alpha}$ is an eventually constant sequence of compatible periodic orbits.

Let $n\geq 0$, $r$ a positive, odd integer and $r\leq 2a^n$.  Consider again the orbit $\osixang{0}{(\frac{1}{2},0)}{\alpha}$.  We know that $\osixang{0}{(\frac{1}{2},0)}{\alpha}$ avoids peripheral squares of $\omegasai{\mbfa}{n}$ for every $n\geq 1$.  Therefore, scaling $\osixang{0}{(\frac{1}{2},0)}{\alpha}$ relative to $(\frac{1}{2},0)$ by $\frac{1}{a^n}$ and making a sufficient translation so that $\xoo=\frac{r}{2a^n}$ results in an orbit of a cell with side length $\frac{1}{a^n}$ that avoids all peripheral squares contained in the cell.  By Theorem \ref{thm:refinement}, we know that the unfolding of the orbit of the cell must not intersect any peripheral squares of a tiling of the plane by the cell (as well as the scaled copy of $\omegasa{\mbfa}$ contained in the cell). Hence, the reflected-unfolding of the orbit of the cell must then also avoid peripheral squares with side-length $\frac{1}{a^k}$, for all $k\geq n$.  Moreover, by Lemma \ref{lem:BAvoids}, the reflected-unfolding of the orbit of the cell cannot intersect any corners of any peripheral squares of $\omegasa{\mbfa}$ with side-length $\frac{1}{a^j}$, $j<n$.  Therefore, $\compseqsixang{0}{(\frac{r}{2a^n},0)}{\alpha}$ is an eventually constant sequence of compatible periodic orbits.
\end{proof}

\begin{figure}
\centering
\subfigure[]{
\includegraphics[width=0.4\textwidth]{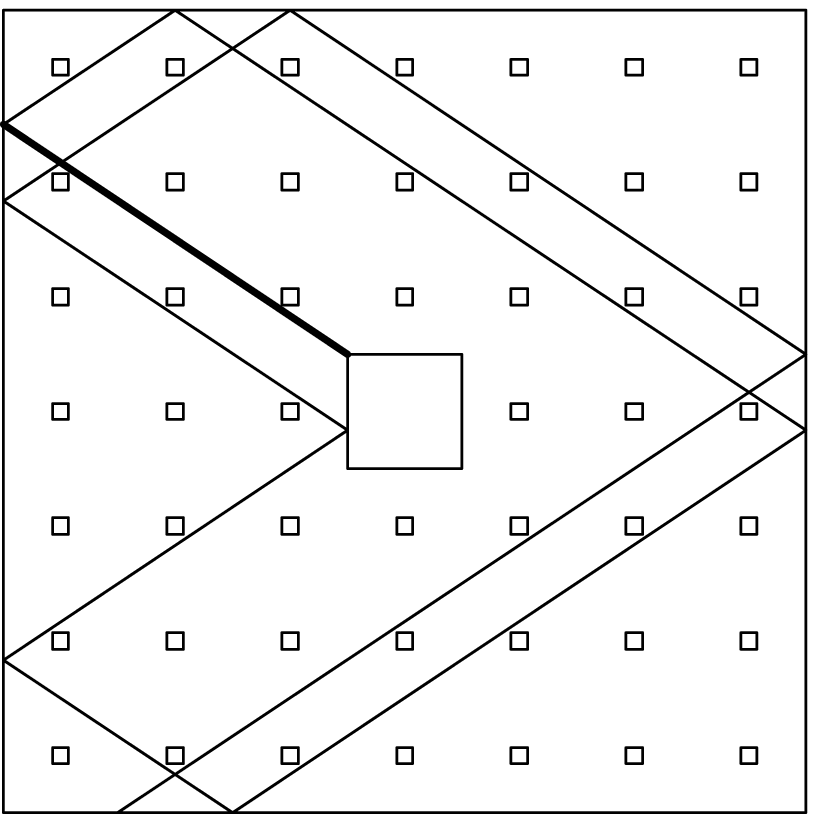}
\label{fig:S7_Slope_2_3}
}
\subfigure[]{
\includegraphics[width=0.4\textwidth]{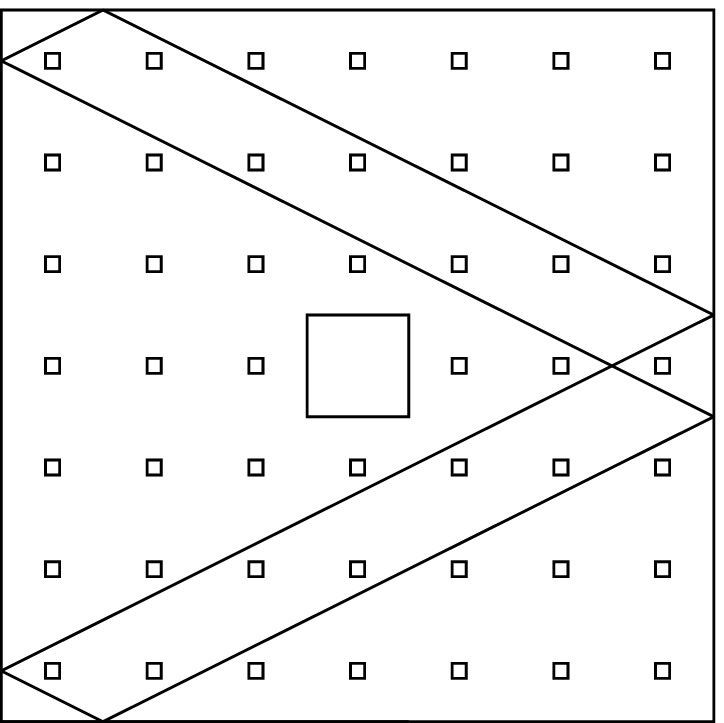}
\label{fig:S7_ Slope_1_2}
}
\caption{In $\Omega(S_{\mbf{7},2})$, $\osixang{2}{(\frac{1}{7},0)}{\frac{2}{3}}$ is a singular orbit (the line segment connecting with the singularity is shown as a segment with a greater weight than the other segments in the orbit), while $\osixang{2}{(\frac{1}{7},0)}{\frac{1}{2}}$ is a periodic orbit.}
\label{fig:singper}
\end{figure}

For an orbit with an initial direction $\alpha\in A_{\mbf{a-2}}$ starting from a corner of some cell $C_{m,a^m}$, but not from $(0,0)$, we have the following result.

\begin{proposition}
\label{prop:compatibleclosedorbits}
Consider a self-similar Sierpinski carpet billiard table$\omegasa{\mbfa}$.  Let $\alpha \in A_{\mbf{a-2}}$, $m\in \mathbb{N}$, and $k \in \{1,2,\cdots, a^m-1\}$. Then there exists an $N \in \mathbb{N}$  such that $\compseqsixang{N}{(\frac{k}{a^m},0)}{\alpha}$ is a constant sequence of compatible closed orbits.
\end{proposition}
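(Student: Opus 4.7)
The plan is to combine Theorem~\ref{thm:gutkinsResultOnBilliards} (closedness of orbits with rational directions) with an unfolding and Diophantine argument tailored to slopes in $A_{\mbf{a-2}}$. Closedness of each orbit $\mathscr{O}_n((\frac{k}{a^m},0),\alpha)$ is immediate: $\Omega(S_{\mbfa,n})$ is a rational polygonal billiard tiled by the integrable square and $\alpha\in A_{\mbf{a-2}}$ is rational, so Theorem~\ref{thm:gutkinsResultOnBilliards} applies. The work lies in showing the sequence is eventually constant.

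First, unfold the orbit to a straight line in the reflection-tiled plane. Writing $\alpha=p/q$ in lowest terms, an induction on the number of bounces shows that each segment of the orbit lies on a line
\begin{equation}
\varepsilon_1 p x + \varepsilon_2 q y = C/a^m,\qquad \varepsilon_i\in\{\pm 1\},~C\in\mathbb{Z},
\end{equation}
the inductive step being a direct calculation using that every reflecting wall of $\Omega(S_{\mbfa,m})$ lies on a line $x=j/a^\ell$ or $y=j/a^\ell$ with $\ell\leq m$. A center of a peripheral square at level $\ell\geq m+1$ in the unfolded plane has coordinates $(\varepsilon_1'(2u+1)/(2a^{\ell-1})+2i,\,\varepsilon_2'(2v+1)/(2a^{\ell-1})+2j)$. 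Plugging into the perpendicular-distance formula and clearing denominators by multiplying by $2a^{\ell-1}$ produces an integer whose parity equals $(p+q)\bmod 2 = 1$, since $\alpha\in A_\mbfa$ forces $p+q$ odd. Hence the integer has absolute value at least $1$, giving
\begin{equation}
\mathrm{dist}(\text{line},\text{center})\,\geq\,\frac{a}{2a^\ell\sqrt{p^2+q^2}}.
\end{equation}
A standard geometric computation shows a line of slope $\pm p/q$ misses a square of side $1/a^\ell$ centered at a given point exactly when the perpendicular distance exceeds $(p+q)/(2a^\ell\sqrt{p^2+q^2})$. Because $\alpha\in A_{\mbf{a-2}}$ forces $p+q\leq a-2<a$, the lower bound strictly exceeds this threshold, so the unfolded line avoids the interior of every peripheral square at level $\geq m+1$.

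Two cases finish the argument. If the unfolded line also avoids every corner of every peripheral square at level $\geq m+1$, then the orbit traces the same genuinely periodic path in every $\Omega(S_{\mbfa,n})$ with $n\geq m$, and $N=m$ works. Otherwise the line first meets such a corner at some minimal level $\ell_1>m$; for $n\geq \ell_1$ the orbit becomes a saddle connection terminating at that corner, and $N=\ell_1$ works. The principal technical obstacle is verifying the induction establishing the segment line equation $\varepsilon_1 p x+\varepsilon_2 q y = C/a^m$ with $C\in\mathbb{Z}$, which requires carefully tracking the offset $C$ through reflections at both the outer walls of $S_0$ and the peripheral squares at levels $\leq m$; a secondary subtlety is cleanly handling the periodic-versus-singular dichotomy so that both cases produce a constant sequence of compatible closed orbits.
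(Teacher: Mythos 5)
Your proposal is correct, but it establishes the key avoidance statement by a genuinely different route than the paper. The paper's proof is a two-line reduction: it invokes Theorem \ref{thm:refinement} to conclude that the orbit, unfolded onto the tiling of the plane by $a^{-m}\Omega(S_{\mbfa,1})$, avoids all peripheral squares of that tiling, and then cites the scaling-and-reflected-unfolding induction from Part 1 of Theorem \ref{thm:sequenceOfCompatiblePeriodicOrbits} to propagate this to every deeper level, so that $\osixang{n}{(\frac{k}{a^m},0)}{\alpha}$ avoids all peripheral squares of side-length less than $a^{-m}$ and the sequence is constant from $N=m$ on. You instead give a self-contained Diophantine argument: tracking the segment lines $\varepsilon_1 px+\varepsilon_2 qy=C/a^m$ through reflections in walls at levels $\leq m$, and using the parity of $p+q$ to bound the distance from any such line to the center of a level-$\ell$ peripheral square ($\ell\geq m+1$) below by $a/(2a^{\ell}\sqrt{p^2+q^2})$, which strictly exceeds the tangency threshold $(p+q)/(2a^{\ell}\sqrt{p^2+q^2})$ precisely because $p+q\leq a-2<a$ for $\alpha\in A_{\mbf{a-2}}$. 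This buys a quantitative gap that isolates exactly why $A_{\mbf{a-2}}$ differs from $A_{\mbfa}\setminus A_{\mbf{a-2}}$ (where $p+q=a$ and the bound is attained at corners), and it replaces the paper's somewhat informal reflected-unfolding induction with a checkable computation; the induction on bounces that you flag as the main obstacle does go through, since every reflecting wall at level $\ell\leq m$ lies on a line $x=j/a^{\ell}$ or $y=j/a^{\ell}$ and the induced substitution changes $C$ by the integer $2ja^{m-\ell}p$ or $2ja^{m-\ell}q$. One correction to your closing case analysis: because your estimate is strict and applies to the \emph{closed} squares, the unfolded line cannot meet a corner of any peripheral square at level $\geq m+1$, so the second branch of your dichotomy is vacuous. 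The periodic-versus-singular alternative that actually survives (and that the paper's remark following the proposition addresses) concerns corners of the finitely many peripheral squares at levels $\leq m$; in either outcome the orbit interacts only with structure already present in $\Omega(S_{\mbfa,m})$, so $N=m$ always works.
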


\begin{proof}
By Theorem \ref{thm:refinement}, the orbit unfolded onto the tiling of the plane by $a^{-m}\Omega(S_{\mbfa,1})$ (that is, $\Omega(S_{\mbfa,1})$ scaled by $a^{-m}$) must avoid all peripheral squares of the tiling. Upon scaling and reflected-unfolding as in the proof of Theorem \ref{thm:sequenceOfCompatiblePeriodicOrbits}, Part 1, we deduce that for every $n\in\mathbb{N}$, $\osixang{n}{(\frac{k}{a^m},0)}{\alpha}$ avoids all peripheral squares of $\Omega(S_{\mbfa,n})$ whose side-length is less than $a^{-m}$. The claim then follows.
\end{proof}

\begin{remark}
Note that Proposition \ref{prop:compatibleclosedorbits} does not say whether $\osixang{n}{(\frac{k}{a^m},0)}{\alpha_n^0}$ is a periodic or singular orbit of $\omegasai{\mbfa}{n}$. The focus of Proposition \ref{prop:compatibleclosedorbits} is on determining a sequence of compatible \textit{closed} orbits, which can include both periodic \textit{and} singular orbits of prefractal billiards. If it hits a corner of some larger peripheral square (which has conic angle $6\pi$), then we must terminate the billiard trajectory at the corner and declare the orbit $\osixang{n}{(\frac{k}{a^m},0)}{\alpha_n^0}$ to be singular. On the other hand, if the orbit avoids corners of peripheral squares, then it will constitute a periodic orbit. See Figure \ref{fig:singper} for examples of both types of orbits.
\end{remark}





\subsection{The translation surface $\ssai{n}$}
\label{subsec:TheTranslationSurfaceSSai}

Recall from \S\ref{subsubsec:TranslationSurfacesViaRationalPolygonalBilliardTables} that a rational billiard $\Omega(D)$ can be used to construct a translation surface by appropriately identifying $2N$ many copies of $\Omega(D)$, where $N=\text{lcm}\{q_1,q_2,...,q_k\}$.  Suppose $\omegasa{\mbfa}$ is a self-similar Sierpinski carpet billiard table and $\omegasai{\mbfa}{n}$ is an approximation of $\omegasa{\mbfa}$.  Then, each interior angle has a denominator $2$.  Therefore, $N=\text{lcm}\{2,...,2\}=2$, meaning that, when four copies of $\omegasai{\mbfa}{n}$ are appropriately identified, we recover the corresponding translation surface $\ssai{n}$; see Figure \ref{fig:squareTorusWithObstaclesAndGeodesic2} for the case of a translation surface constructed from $\omegasai{\mbf{7}}{2}$, as well as an illustration of how a billiard ball would traverse the corresponding translation surface.

\begin{figure}
\begin{center}
\includegraphics[scale=.4]{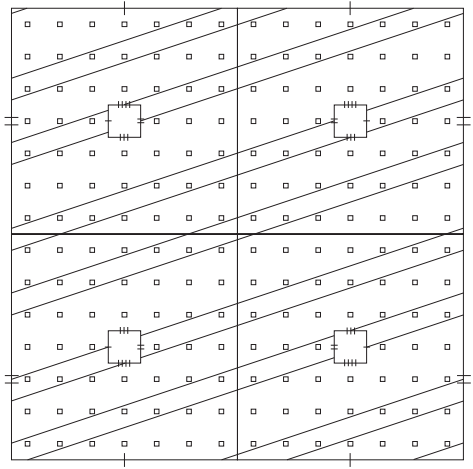}
\end{center}
\caption{We see here four copies of the prefractal approximation $\Omega(S_{\mbf{7},2})$ properly identified in such a way that the resulting translation surface is no longer a torus, but a higher genus surface.  The geodesic shown here further indicates exactly how sides of $\Omega(S_{\mbf{7},2})$ are identified in the construction of $\mathcal{S}(S_{\mbf{7},2})$. Furthermore, the way in which sides of $\Omega(S_{\mbf{7},2})$ are identified makes the geodesic equivalent to a billiard orbit under the action of the group of symmetries acting on the billiard table to produce the translation surface.  In the terminology introduced in \S \ref{subsec:mathematicalBilliards}, one could consider the geodesic in this figure to be the billiard orbit unfolded in the translation surface.}
\label{fig:squareTorusWithObstaclesAndGeodesic2}
\end{figure}

As previously shown in Figure \ref{fig:ReflectionDefinedInCorners}, reflection in a corner with an angle measuring $\frac{\pi}{2}$ in the square can be made well-defined. The justification for this is provided by the fact that in the associated translation surface, a vertex of the unit square is identified with three other vertices in such a way that the conic angle about the conic singularity is $2\pi$.  In Figure \ref{fig:squareTorusWithObstaclesAndGeodesic}, we see the effect that a removable singularity (i.e., a singularity with conic angle $2\pi$) has on the straight-line flow.  Specifically, each point of $\mathcal{S}(S_{\mbfa,n})$ corresponding to a corner of the zeroth level approximation $\Omega(S_0)$ (or, the unit square billiard table), is a removable singularity of the translation surface; each point of $\mathcal{S}(S_{\mbfa,n})$ corresponding to a corner of a peripheral square of $\Omega(S_{\mbfa, n})$ is a nonremovable singularity of the translation surface, with the conic angle about each nonremovable singularity measuring $6\pi$.

\begin{figure}
\begin{center}
\includegraphics[scale=.4]{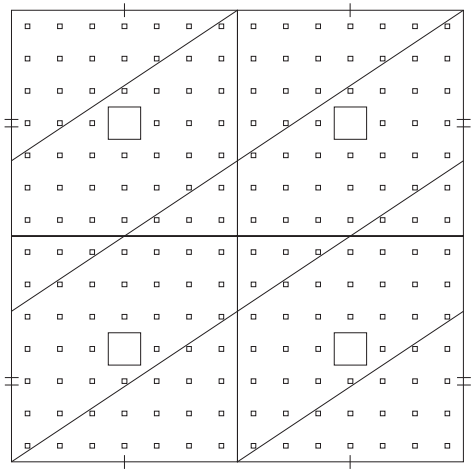}
\end{center}
\caption{In this figure, we see a geodesic that of the translation surface $\mathcal{S}(S_{\mbf{7},2})$ that intersects removable conic singularities of the surface.  As such, the straight-line flow can continue unimpeded. Since the geodesic is equivalent to a billiard orbit in $\Omega(S_{\mbf{u},2})$, reflection in the corners with angles measuring $\frac{\pi}{2}$ can be determined from how an equivalent geodesic passes through the corresponding removable singularity.}
\label{fig:squareTorusWithObstaclesAndGeodesic}
\end{figure}

Using the formula given in Equation (\ref{eqn:genus}), the genus $g_n$ of a translation surface $\mathcal{S}(S_{\mbfa,n})$ is calculated as follows.
\begin{align}
g_n &= 1+2\sum_{m=1}^n a^{m-1}.
\end{align}
\noindent Then, for a sequence of rational (prefractal) billiard tables $\{\Omega(S_{\mbfa,n})\}_{n=0}^\infty$ approximating $\Omega(S_\mbfa)$, we have that $\lim_{n\to\infty} g_n = \infty$.  Hence, if there is a translation surface associated with the (yet to be properly defined) self-similar billiard table $\omegasa{\mbfa}$, then such a translation surface presumably has infinite genus.  On the other hand, if such a surface is given as a suitable limit of translation surfaces $\ssai{n}$, then such a surface will presumably have no surface area. In other words, it is reasonable to expect that a suitable translation surface cannot be constructed as a suitable limit of a sequence of translation surfaces $\ssai{n}$.

\section{Concluding remarks and open questions}
\label{sec:ConcludingRemarks}

In this paper we have made progress in identifying a family of periodic orbits in the self-similar carpet billiard $\omegasa{\mbfa}$. As of now, an orbit of a self-similar Sierpinski carpet billiard is the  trivial limit of an eventually constant sequence of compatible periodic orbits.  As an interim goal, we would like to identify sequences of compatible periodic orbits that are not eventually constant, but converge (in a suitable sense) to a set that constitutes a periodic orbit of a self-similar Sierpinski carpet billiard table $\omegasa{\mbfa}$.   Ultimately, we would like to construct a well-defined (topological) dynamical system on a self-similar Sierpinski carpet billiard table and determine whether or not there exists a topological dichotomy for the billiard flow.

\begin{question}
If one can construct a well-defined notion of reflection for $\omegasa{\mbfa}$, is it possible to show that, for a fixed direction $\alpha^0$, the billiard flow in the direction $\alpha^0$ is either closed or dense in $\omegasa{\mbfa}$?
\end{question}

In constructing the translation surface corresponding to $\Omega(S_{\mbfa,n})$, we have come up with a concrete example of a sequence of translation surfaces whose genera increase to infinity as $n\to\infty$. However, because $S_\mbfa$ has zero Lebesgue area, it is unclear exactly how to describe the limit of the sequence of translation surfaces as a translation surface, let alone a surface with any specific structure.

\begin{question}
\label{ques:suitableNotionOfLimitTranslationSurface}
Can one apply a suitable notion of limit to a sequence of translation surfaces $\{\ssai{i}\}_{i=0}^\infty$ so as to recover a `surface' with some structure that is analogous to that of a translation surface?
\end{question}

If Question \ref{ques:suitableNotionOfLimitTranslationSurface} can be answered in the affirmative, a natural question to ask is whether one can determine a flow on $\mathcal{S}(S_\mbfa)$, and whether such a flow can be shown to be equivalent to the (yet to be determined) billiard flow on $\omegasa{\mbfa}$.

A related (and interesting) problem involves investigating the behavior of a sequence of compatible orbits of billiard tables which share a similar construction with the self-similar Sierpinski carpet billiard table $\Omega(S_\mbfa)$, but occupy nonzero area. A clear candidate is the non-self-similar Sierpinski carpet $S_\mbfa$ with $\mbfa = (a_1^{-1}, a_2^{-1},\cdots)$ being a sequence in $l^2$ (\emph{i.e.,} $\sum_{j=1}^\infty a_j^{-2} <\infty$). Since $\limsup \mbfa =0$ in this setting, \cite[Theorem 5.3(a)]{Du-CaTy} implies that $S_\mbfa$ contains nontrivial line segments of every rational slope, and contains no nontrivial line segments of any irrational slope. One may then begin to discuss and analyze the behavior of a sequence of compatible orbits with each orbit having an initial direction given by $\alpha^0\in \slopesa{\mbfa}=\mathbb{Q}$.

A related Sierpinski carpet billiard we may consider investigating is a so-called \textit{fat Sierpinski carpet} billiard table, constructed as follows. Begin by fixing a positive odd integer $a\geq 3$, and partition the unit square into $a^2$ many squares of side-length $a^{-1}$. Rather than removing the open middle square of side-length $a^{-1}$ as in the first-stage of the construction of a self-similar Sierpinski carpet, one removes an open square of side-length $(1-\delta) a^{-1}$, $0<\delta<1$. In the second stage of the construction, a square of side-length $(1-\delta)a^{-2}$ centered within each of the remaining $a^2-1$ squares of side-length $a^{-1}$ is removed. This process is then repeated ad infinitum. In effect, each removed square of side-length $(1-\delta) a^{-n}$ is surrounded by a solid square annulus of width $(\delta/2) a^{-n}$. Such a carpet admits nontrivial line segments of irrational slopes, and there will be additional base points from which a nontrivial line segment can emanate. We can then ask the following question.

\begin{question}
How does the existence and behavior of a sequence of compatible dense orbits depend on $\delta$? Moreover, can one recover a dense orbit of the self-similar Sierpinski carpet billiard $\Omega(S_\mbfa)$ in the limit as $\delta\to 0$?
\end{question}


The development of our understanding of the Sierpinski carpet billiard table may be possibly aided by studying the results for other types of billiard tables.  In particular, the so-called wind-tree billiard tables (and generalizations of them) investigated in \cite{CoGut,De,HuLeTr} are very reminiscent of a Sierpinski carpet billiard table.

Alternatively, we may be able to approach the problem of determining billiard dynamics on $\Omega(S_\mbfa)$ by taking an algebraic perspective. For each translation surface $\mathcal{S}(S_{\mbfa,n})$, there is an associated\textit{Veech group}. A Veech group of a translation surface $\mathcal{S}(D)$ determined from a rational billiard table $\Omega(D)$ is the stabilizer of $\mathcal{S}(D)$.  The work of \cite{We-Sc} may prove useful in determining a well-defined translation structure on a suitable limit of a sequence of translation surfaces $\mathcal{S}(S_{\mbfa,n})$.  In particular, analyzing the inverse limit of an inverse limit sequence of Veech groups $\{\Gamma(S_{\mbfa,n})\}_{n=0}^\infty$ (assuming such an inverse limit sequence can be properly constructed) may yield a group that is the stabilizer of some surface.  With such a surface and an associated stabilizer defined, we may begin to investigate whether or not the proposed surface contains saddle connections. The directions of such saddle connections may then indicate directions for which closed geodesics are occurring.

\section*{Acknowledgements}

We wish to thank Jeremy Tyson for several useful discussions regarding his work with Estibalitz Durand-Cartagena. We also thank our respective Ph.D. advisors, Robert S. Strichartz and Michel L. Lapidus, for their support of the project.

\end{document}